%% file: main.tex
\documentclass[11pt]{amsart}

\input{includes/preamble}

\begin{document}

\begin{abstract}
We prove in ZFC that every torsion-free Abelian group of cardinality
$\cfrak$ admits a Hausdorff countably compact group topology without
nontrivial convergent sequences.  In particular, this applies to the free
Abelian group $\Z^{(\cfrak)}$, the Baer--Specker group $\Z^\omega$ and
$\Q^{(\cfrak)}$. For the topology constructed on $\Z^{(\cfrak)}$, the
coordinatewise nonnegative cone is countably compact in the subspace
topology. Consequently, there exists
in ZFC a commutative Tychonoff countably compact topological semigroup
which has two-sided cancellation but is not a group, giving a negative
answer to Wallace's question. Combined with earlier results, the main
theorem also yields in ZFC a Tychonoff countably compact topological
semigroup containing a copy of the bicyclic semigroup and a functionally
Hausdorff countably compact paratopological group that is not a
topological group.
\end{abstract}

\maketitle

\input{sections/01-introduction}
\input{sections/02-preliminaries-and-topological-reduction}

\input{sections/03-uniform-finite-approximation-by-homomorphisms}
\input{sections/04-bounded-independence-and-deletion}
\input{sections/05-triangular-coding-and-block-ultrafilters}
\input{sections/06-local-homomorphism-fusion}
\input{sections/07-transfinite-extension}
\input{sections/08-group-topologies}
\input{sections/09-wallace-semigroup}
\input{sections/10-consequences-and-concluding-remarks}
\input{sections/10-acknowledgments}

\bibliographystyle{amsplain}
\bibliography{includes/references}

\end{document}

%% file: includes/preamble.tex
\usepackage[T1]{fontenc}
\usepackage{lmodern}
\usepackage{microtype}
\usepackage{enumitem}
\usepackage{amsmath,amssymb,mathtools,mathrsfs}
\usepackage[colorlinks=true,linkcolor=blue,citecolor=blue,urlcolor=blue]{hyperref}

\setlist[enumerate,1]{label=\textup{(\roman*)},ref=\textup{(\roman*)}}

\newtheorem{theorem}{Theorem}[section]
\newtheorem{corollary}[theorem]{Corollary}
\newtheorem{lemma}[theorem]{Lemma}
\newtheorem{proposition}[theorem]{Proposition}
\newtheorem*{claim}{Claim}
\theoremstyle{definition}
\newtheorem{definition}[theorem]{Definition}

\newcommand{\T}{\mathbb T}
\newcommand{\N}{\mathbb N}
\newcommand{\Z}{\mathbb Z}
\newcommand{\Q}{\mathbb Q}
\newcommand{\cfrak}{\mathfrak c}
\newcommand{\Hom}{\operatorname{Hom}}
\newcommand{\supp}{\operatorname{supp}}
\newcommand{\Rel}{\operatorname{Rel}}
\newcommand{\htop}{\operatorname{ht}}

\title[The Wallace problem and torsion-free groups]
{The Wallace problem and countably compact torsion-free
Abelian groups in ZFC}

\author{Juliane Trianon Fraga}
\address{Independent researcher, S\~ao Paulo, SP, Brazil}
\email{julianetrianon@gmail.com}

\author{Vinicius de Oliveira Rodrigues}
\address{Department of Mathematics, Institute of Mathematics, Statistics and
Computer Science, University of S\~ao Paulo, Rua do Mat\~ao, 1010, 05508-090
S\~ao Paulo, SP, Brazil}
\email{vinior@ime.usp.br}
\date{}

\subjclass[2020]{Primary 22A05, 54H11, 54D20, 22A15; Secondary 20K20, 54A20}
\keywords{countably compactness, topological groups, torsion-free Abelian groups,nontrivial convergent sequence,
Wallace semigroup}

%% file: sections/01-introduction.tex
\section{Introduction}

A classical theorem of Gelbaum, Kalisch, and Olmsted and of Numakura states
that every compact Hausdorff topological semigroup with two-sided cancellation
is a topological group
\cite{GelbaumKalischOlmsted1951,Numakura1952}.  At the 1953 annual meeting of
the American Mathematical Society, A.~D. Wallace observed that it was not
known whether compactness could be replaced by countable compactness.  He
recorded the question in print in 1955 \cite[p.~101]{Wallace1955}; it was later
listed as Question~3L.1 by Comfort in \emph{Open Problems in Topology}
\cite{Comfort1990}:

\begin{quote}
Must every Hausdorff countably compact topological semigroup with two-sided
cancellation be a topological group?
\end{quote}

A negative example is usually called a \emph{Wallace semigroup}.  The problem
is delicate because several natural strengthenings of countable compactness do
force the compact conclusion.  For example, the conclusion holds under first
countability and under sequential compactness
\cite{MukherjeaTserpes1972,Grant1993}.  On the other hand,
by assuming additional set-theoretic hypotheses, several consistent
constructions of Wallace semigroups have been obtained.  Robbie and
Svetlichny constructed a Wallace semigroup under the Continuum Hypothesis, and
Tomita obtained one under Martin's Axiom for countable partial orders
\cite{RobbieSvetlichny1996,Tomita1996Wallace}.  However, the question of
whether a Wallace semigroup exists in the standard Zermelo--Fraenkel set
theory with the Axiom of Choice (ZFC) remained open for 73 years.

The route from topological groups to Wallace semigroups passes through a
second long-standing problem.  A countably compact Boolean group without
nontrivial convergent sequences was constructed in ZFC by Hru\v{s}\'ak, van Mill,
Ramos-Garc\'{\i}a, and Shelah \cite{HrusakVanMillRamosShelah2021}.
That construction resolved the general existence problem posed by van Douwen
and, together with the reductions of van Douwen and Tomita, the product problem
of Comfort \cite{vanDouwen1980,Tomita2005Square}.
However, it did not yield a Wallace semigroup.  Accordingly, Hru\v{s}\'ak,
van Mill, Ramos-Garc\'{\i}a, and Shelah asked whether
there exists in ZFC a non-torsion countably compact topological group without
nontrivial convergent sequences
\cite[Questions~5.5 and~5.6]{HrusakVanMillRamosShelah2021}; see also
\cite[Questions~19 and~20]{HrusakShibakov2021}.

Free Abelian groups form the canonical torsion-free test case.  In 1990,
Comfort asked whether the free Abelian group on $\cfrak$ generators admits a
countably compact group topology in ZFC, attributing the question to Tkachenko
\cite[Question~3C.1]{Comfort1990}.  Tkachenko had constructed such a topology
under the Continuum Hypothesis \cite{Tkachenko1990}.  Later constructions used
Martin's Axiom or selective ultrafilters
\cite{KoszmiderTomitaWatson2000,MadariagaGarciaTomita2007,
BoeroPereiraTomita2019}.  Stronger examples with prescribed compactness of
finite powers were also obtained from additional set-theoretic hypotheses
\cite{BelliniHartRodriguesTomita2023,FuentesMaguinaRodriguesTomita2024}.
The ZFC question for the free Abelian group, including the stronger requirement
of having no nontrivial convergent sequences, remained open
\cite[Problem~6.2]{BelliniRodriguesTomita2021Forcing} and
\cite[Question~6.2]{BelliniHartRodriguesTomita2023}.

Our result treats the entire class of
torsion-free Abelian groups of cardinality continuum.

\begin{theorem}\label{thm:main}
In ZFC every torsion-free Abelian group of cardinality $\cfrak$ admits a
Hausdorff countably compact group topology in which every convergent sequence
is eventually constant.
\end{theorem}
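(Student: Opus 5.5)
We will follow the standard homomorphism approach, organized along the section structure announced above. First, the problem reduces to a single group. Every torsion-free Abelian group $G$ of cardinality $\cfrak$ embeds in $\Q^{(\cfrak)}$ and contains a copy of $\Z^{(\cfrak)}$ (a maximal independent set has size $\cfrak$). We will not pass topologies between groups directly. Instead, we fix a maximal independent family $\{x_\xi:\xi<\cfrak\}\subseteq G$, so that $G$ is an essential extension of $\bigoplus_\xi \Z x_\xi$ inside its divisible hull $\Q^{(\cfrak)}$. Our aim is to construct a family $\mathcal H\subseteq\Hom(G,\T)$ of homomorphisms that separates points. The topology is then the initial topology induced by $\mathcal H$, which is a Hausdorff precompact group topology.

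The topological reduction (Section~2) should run as follows. It suffices to guarantee the following condition for every injective sequence $(g_n)$ in $G$ whose image is unbounded in a suitable coded sense. There must be a free ultrafilter $p$ and an element $g\in G$ such that $g=p\text{-}\lim g_n$, meaning $\phi(g)=p\text{-}\lim\phi(g_n)$ for all $\phi\in\mathcal H$. This yields countable compactness. In addition, for each such sequence some $\phi\in\mathcal H$ must make $(\phi(g_n))$ nonconvergent in $\T$; this rules out nontrivial convergent sequences. Following van Douwen and Tomita, we enumerate all countable sequences in $G$ in type $\cfrak$, interleaved with the generators $x_\xi$. We then build the homomorphisms by transfinite recursion of length $\cfrak$ (Section~7): at stage $\alpha$, we extend partial homomorphisms defined on the subgroup generated by $\{x_\xi:\xi<\alpha\}$ and its purification. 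Torsion-freeness helps here, since homomorphisms into the divisible group $\T$ always extend, so we only need to control the values.

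The ZFC difficulty is the absence of selective ultrafilters or MA. Following the Hru\v{s}\'ak--van Mill--Ramos-Garc\'{\i}a--Shelah strategy, we replace generic ultrafilters with a construction done one sequence at a time. Each sequence is first thinned to a subsequence whose supports form a $\Delta$-system with a triangular, block structure; this is the bounded independence and deletion step of Sections~4--5. The block structure makes the sequence behave independently modulo a finite root, so a single ultrafilter extending a block filter can be assigned to it and an accumulation point chosen as a new generator $x_\xi$. Section~3's uniform finite approximation then gives the key combinatorial lemma: any finitely many prescribed $\T$-values on an independent finite block configuration can be approximately realized by a genuine homomorphism, with error bounds that are uniform in the size of the configuration. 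Section~6 fuses these local homomorphisms along the ultrafilter into a homomorphism that takes the prescribed $p$-limit values. It also forces $(\phi(g_n))$ to oscillate, witnessing nonconvergence.

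I expect the main obstacle to be the fusion step in the non-Boolean setting. In $\T$, values cannot simply be chosen independently, because integer coefficients of unbounded size create relations between the coordinates. Moreover, divisibility in $G$ (for example in $\Q^{(\cfrak)}$) imposes rational constraints. My first attempt would be to normalize each sequence by subtracting its $p$-limit and dividing out the common denominators, using purity. I would then show that the coefficient growth along a block can be bounded, or else exploited to force divergence, using Kronecker-type approximation with uniform error. If this holds in ZFC without ultrafilter assumptions, the recursion closes, and the resulting $\mathcal H$ gives the topology required by Theorem~\ref{thm:main}.
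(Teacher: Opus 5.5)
Your proposal has a genuine gap at its core. It never says how to build a single $\phi\in\Hom(G,\T)$ that is nonzero at a prescribed $x\neq0$ and still satisfies the limit conditions for all sequences, and you make the conclusion explicitly conditional on this (``if this holds in ZFC \ldots\ the recursion closes''). Normalizing by the $p$-limit, dividing out denominators, and ``bounding or exploiting coefficient growth'' is not an argument. The $\Delta$-system thinning you propose also does not give the needed independence: for $s(n)=ne_0$ every support equals the root.

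The paper's missing ideas are concrete, in three steps.
\begin{enumerate}
\item Everything is fixed before any homomorphism is built. For each injective $s_\alpha$ the paper chooses a fresh coordinate $\iota(\alpha)$ above all supports of $s_\alpha$. It takes a subsequence $h_\alpha$ whose blocks $X_{\alpha,l}=\{h_\alpha(n)-e_{\iota(\alpha)}:n\in I_l\}$ are $M_l$-independent; this is a greedy extraction using only torsion-freeness (Lemma~\ref{lem:finite-extraction}). It fixes an ultrafilter $p_\alpha\supseteq\mathscr D_\alpha$, the filter of sets whose relative density in $I_l$ tends to $1$ along an almost disjoint set $A_\alpha$. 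The intended limit is the nonzero point $e_{\iota(\alpha)}$.
\item For $x\neq0$ only the countable group $G_D$ matters, where $D$ is the closure of $\supp x$ under $\iota(\alpha)\mapsto\bigcup_n\supp h_\alpha(n)$, and one sets $\psi_D=\lim_l\psi_l$. By almost disjointness at most one block $X_l$ is active at stage $l$. Bounded deletion (Lemma~\ref{lem:bounded-deletion}) discards at most $R_l=N_l/(l+2)$ of its elements, so that no mixed relation of height $\le Q_l$ remains with the finite control set $A_l$ (which contains $x$, the earlier $Y_j$, and $g_0,\dots,g_l$). The uniform Kronecker lemma then gives $\psi_{l+1}$ close to $\psi_l$ on $A_l$ and small on the surviving set $Y_l$. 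Its uniformity is in the group and the elements for fixed $m$ and $\varepsilon$, not in the size; $N_l$ and $M_l$ are tuned to $q(m,\varepsilon_l)$. Since the deleted sets have vanishing density, $p_\alpha$ only sees the $Y_l$.
\item Passing from $G_D$ to $G$ is a trivial coordinatewise recursion, setting $\theta_{\iota(\alpha)}(1)=p_\alpha\text{-}\lim_n w_{\alpha,n}$, which exists by compactness of $\T$. No stage-by-stage interleaving of sequences and generators is needed; each $\psi_x$ is built independently.
\end{enumerate}

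Your treatment of convergent sequences also does not work as stated. Restricting the accumulation requirement to sequences ``unbounded in a suitable coded sense'' is unjustified, since countable compactness needs an accumulation point for every countably infinite set. You also give no mechanism for choosing, for each injective sequence, some $\phi\in\mathcal H$ along which it oscillates. The paper needs no such step. Every injective sequence, including every translate $s(n)-x$, is enumerated and has a subsequence with nonzero $p$-limit $e_{\iota(\alpha)}$. So if $s$ converged to $x$, the corresponding subsequence of $s-x$ would converge to $0$ yet have a nonzero free-ultrafilter limit, contradicting Hausdorffness (Lemma~\ref{lem:topological-reduction}).
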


Theorem~\ref{thm:main} constructs in
ZFC a torsion-free Abelian countably compact group with no
non-trivial convergent sequences. Before this result, the known
ZFC constructions produced only torsion groups. It was therefore
unknown whether there exists in ZFC a non-torsion countably
compact group with no non-trivial convergent sequences. This question was
posed in \cite[Question~6(a)]{Tomita2019SmallPowers},
\cite[Question~20]{HrusakShibakov2021},
\cite[Question~5.6]{HrusakVanMillRamosShelah2021}, and
\cite[Question~6.8]{TomitaTrianonFraga2022}.
Theorem~\ref{thm:main} answers it affirmatively. Moreover, since the groups
constructed here are torsion-free, the theorem also proves the existence in
ZFC of a torsion-free countably compact group with no non-trivial
convergent sequences, as asked in
\cite[Problem~1.2]{BelliniRodriguesTomita2021Forcing}.

Several classical groups illustrate the gain in generality.  Taking
$G=\Z^{(\cfrak)}$ resolves Tkachenko's free Abelian problem in ZFC, with the
additional absence of nontrivial convergent sequences.  Taking
$G=\Z^\omega$ answers Question~14.7(i) of Dikranjan and Shakhmatov about the
Baer--Specker group \cite{DikranjanShakhmatov2005}.
We also have the following consequence for the additive group of real numbers.

\begin{proposition}\label{prop:rational}
In ZFC the additive group $\Q^{(\cfrak)}\approx\mathbb R$ admit Hausdorff
countably compact group topologies in which every convergent sequence is
eventually constant.
\end{proposition}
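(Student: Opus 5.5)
This is a direct corollary of Theorem~\ref{thm:main}. The plan is to check that $\Q^{(\cfrak)}$ is torsion-free of cardinality $\cfrak$, and to explain the isomorphism $\Q^{(\cfrak)}\approx\mathbb R$. The latter transfers the topology to the additive group of the reals.

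First, $\Q^{(\cfrak)}$ is the direct sum of $\cfrak$ copies of $\Q$. A nonzero element has finite support and some nonzero rational coordinate $q$. For any $n\ge1$ the corresponding coordinate of $n$ times that element is $nq\neq0$, so the group is torsion-free. For cardinality, the group is the union over finite subsets $F\subseteq\cfrak$ of $\Q^F$. There are $\cfrak$ such subsets, and each $\Q^F$ is countable, so $|\Q^{(\cfrak)}|\le\cfrak\cdot\aleph_0=\cfrak$. The $\cfrak$ unit vectors give the reverse inequality. Theorem~\ref{thm:main} now yields a Hausdorff countably compact group topology on $\Q^{(\cfrak)}$ in which every convergent sequence is eventually constant.

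Second, I would justify $\Q^{(\cfrak)}\approx\mathbb R$ as abstract groups. The group $(\mathbb R,+)$ is divisible and torsion-free, hence a $\Q$-vector space. By the axiom of choice it has a Hamel basis $B$. Since $|\mathbb R|=\cfrak$ and a $\Q$-space with basis $B$ has cardinality $\max(|B|,\aleph_0)$, we get $|B|=\cfrak$. Hence $\mathbb R\cong\Q^{(B)}\cong\Q^{(\cfrak)}$ as groups. Transporting the topology along this isomorphism $\varphi$ gives a group topology on $(\mathbb R,+)$ with the same properties. Indeed, Hausdorffness, countable compactness and the absence of nontrivial convergent sequences are all preserved by the homeomorphic group isomorphism $\varphi$.

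Nothing here is a real obstacle. The only points needing care are the cardinal arithmetic and the use of a Hamel basis, which is available in ZFC. Alternatively one could avoid the isomorphism entirely: $\mathbb R$ is itself torsion-free of cardinality $\cfrak$, so Theorem~\ref{thm:main} applies to it directly. This is why the statement speaks of both groups.
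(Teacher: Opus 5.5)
Your proposal is correct and follows the paper's own argument: the paper likewise applies Theorem~\ref{thm:main} to $G=\Q^{(\cfrak)}$ and transports the resulting topology along an algebraic isomorphism $\Q^{(\cfrak)}\cong(\mathbb R,+)$. Your checks of torsion-freeness, cardinality and the Hamel-basis isomorphism supply details the paper leaves implicit. Your alternative of applying the theorem directly to $\mathbb R$ is also valid.
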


We apply the construction to
$\Z^{(\cfrak)}$ and prove that its nonnegative cone is countably compact in the
induced topology.  Thus the counterexample is commutative and Tychonoff in
addition to satisfying the requirements of the problem.

\begin{corollary}\label{cor:wallace}
In ZFC there exists a commutative Tychonoff countably compact topological
semigroup with two-sided cancellation which is not a group.
\end{corollary}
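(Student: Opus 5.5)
The plan is to take the topology $\tau$ on $G=\Z^{(\cfrak)}$ given by Theorem~\ref{thm:main} and let $S=\N^{(\cfrak)}$, the coordinatewise nonnegative cone, carry the subspace topology. The corollary then splits into routine checks and one nontrivial property.

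First the routine checks. $S$ is a submonoid of the commutative group $G$, so it is a commutative semigroup with two-sided cancellation. Addition on $S$ is the restriction of the continuous group operation of $(G,\tau)$, so $S$ is a topological semigroup. Since $(G,\tau)$ is a Hausdorff topological group, it is Tychonoff, and hence so is the subspace $S$. Finally, $S$ is not a group: for a basis element $e_\xi$, the inverse $-e_\xi$ does not lie in $S$.

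The whole difficulty is countable compactness of $S$. The abstract announces this as part of the construction on $\Z^{(\cfrak)}$, and I expect it to be established in the section devoted to the Wallace semigroup. Abstractly it does not follow from countable compactness of $G$, because $S$ need not be closed. So I would go back to the construction itself.

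That construction presumably proceeds as follows. One builds the topology as the weak topology induced by $\cfrak$ many homomorphisms $\phi_\alpha\colon G\to\T$. Along the way one uses a transfinite enumeration of all countably infinite subsets of $G$, together with ultrafilters $p$ (the block ultrafilters of the relevant section). The requirement is that each such sequence $(g_n)$ has a $p$-limit: an element $g$ with $p\text{-}\lim\phi_\alpha(g_n)=\phi_\alpha(g)$ for every $\alpha$.

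The plan is to strengthen the bookkeeping so that, whenever $(g_n)\subseteq S$, the assigned accumulation point $g$ is also chosen in $S$. Here the triangular coding is useful. After passing to a $p$-large subsequence, I would write $g_n$ in a block-normal form $g_n=h+\sum_k c_{n,k}d_k$, where $h$ is a stable part, the $d_k$ are new generators with disjoint supports, and the $c_{n,k}$ are nonnegative integer coefficients. Whenever the sequence has no bounded stable part, one sends it to a fresh basis element $e_\xi$, or to a nonnegative combination of such elements, of $\Z^{(\cfrak)}$. This is possible because the construction has freedom in defining $\phi_\alpha$ on the as-yet-unused generators: the local homomorphism fusion section extends homomorphisms by prescribing values on fresh coordinates. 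Since every $e_\xi\in S$, the limits stay in the cone.

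Thus the key step is to verify that the fusion and transfinite extension lemmas allow the target $g$ to be prescribed as a nonnegative combination of fresh generators, consistently with the earlier commitments. Once that is in place, every sequence in $S$ has an accumulation point in $S$, and $S$ is countably compact.

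If the paper instead proves directly that the cone is countably compact for the constructed topology, I simply invoke that result.
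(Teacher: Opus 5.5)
There is a genuine gap in the one nontrivial step. Your routine checks are fine: $P=\N^{(\cfrak)}$ is a commutative cancellative subsemigroup, addition is continuous, $P$ is Tychonoff as a subspace, and $P$ is not a group. You are also right that countable compactness of $P$ does not follow from that of $(G,\tau)$. In fact $P$ is not closed: for an injective $s$ in $P$, the construction applied to the injective sequence $s+e_0$ puts $e_{\iota(\gamma)}-e_0\notin P$ into $\overline P$.

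At the crucial point, however, the proposal stops being a proof. You propose to re-engineer the bookkeeping so that sequences in $S$ get targets in $S$, using:
\begin{itemize}
\item a block-normal form $g_n=h+\sum_k c_{n,k}d_k$ with nonnegative coefficients;
\item a case split on whether there is a ``stable part'';
\item a deferred ``key step'' checking that the fusion and extension lemmas allow such prescribed targets.
\end{itemize}
None of this is justified. You do not show that such a normal form exists on a $p$-large set, nor why the $c_{n,k}$ would be nonnegative. You do not say what target is used in the stable-part case, or how that fits the requirement of Lemma~\ref{lem:topological-reduction} that every injective sequence, including translates, gets a nonzero limit. The fallback of invoking the paper's result is not an argument.

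The missing observation is that no modification is needed. The construction already sends every injective sequence in $G$ to a single fresh basis vector. By Lemma~\ref{lem:initial-topology}, $p_\alpha\text{-}\lim_n s_\alpha(\varphi_\alpha(n))=e_{\iota(\alpha)}$ for every $\alpha<\cfrak$, and $e_{\iota(\alpha)}\in P$ automatically. The argument for the cone is then:
\begin{itemize}
\item Given a countably infinite $A\subseteq P$, enumerate it injectively; this enumeration is some $s_\alpha$.
\item The subsequence $h_\alpha=s_\alpha\circ\varphi_\alpha$ takes values in $A\subseteq P$, and its $p_\alpha$-limit $e_{\iota(\alpha)}$ also lies in $P$. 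Hence the same $p_\alpha$-limit holds in the subspace topology of $P$.
\item Since $p_\alpha$ is free and $h_\alpha$ is injective, $e_{\iota(\alpha)}$ is an accumulation point of $A$ in $P$.
\end{itemize}
That is the entire countable-compactness argument. The uniform choice of a basis vector as target was made so that every limit is nonzero, and it is exactly this choice that makes the cone countably compact with no extra bookkeeping.
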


This gives a negative answer in ZFC to Wallace's question
\cite{Wallace1955,Comfort1990}.  It also settles its later ZFC formulations in
\cite[p.~12]{Shakhmatov2001}, \cite[p.~524]{Tkachenko2002Between},
\cite[Problem~523]{Pearl2004},
\cite[Question~5.1(a)]{BoeroPereiraTomita2019},
\cite[Question~19]{HrusakShibakov2021},
\cite[Question~5.5]{HrusakVanMillRamosShelah2021},
\cite[Problem~6.4]{BelliniRodriguesTomita2021Forcing},
\cite[Question~6.7]{TomitaTrianonFraga2022}, and
\cite[Question~6.3]{BelliniHartRodriguesTomita2023}.

The same theorem has further consequences which are not specializations to a
single familiar group.  Known results reduce problems about the
bicyclic semigroup, countably compact paratopological groups, and monothetic
monoids to the existence of a torsion-free Abelian countably compact group
without nontrivial convergent sequences.  Section~\ref{sec:consequences}
records these consequences and a consequence for suitable sets.  There we also
answer a question of Shakhmatov: whether there exists in ZFC a countably
compact free Abelian group in which every infinite closed subset has
cardinality at least $\cfrak$ \cite[p.~13]{Shakhmatov2001}.  We prove that
the topological group constructed on $\Z^{(\cfrak)}$ has this property.
We also delimit questions for which Theorem~\ref{thm:main} gives a partial answer.

%% file: sections/02-preliminaries-and-topological-reduction.tex
\section{Preliminaries and the topological reduction}\label{sec:preliminaries}

We write $\N=\{0,1,2,\ldots\}$, regard each $m\in\N$ as the finite ordinal
$\{i:i<m\}$, and index every $m$-tuple by $i<m$.  We use additive notation.
The circle group
$\T=\mathbb R/\mathbb Z$ carries the quotient metric induced by the Euclidean
metric on $\mathbb R$, determined by
\[
 \|x+\mathbb Z\|=\min_{n\in\mathbb Z}|x-n|
 \qquad(x\in\mathbb R).
\]
If $(x_n:n\in\N)$ is a sequence in a Hausdorff topological space and $p$ is
an ultrafilter on $\N$, then
$p\text{-}\lim_n x_n=x$ means that
\[
 \text{for every neighborhood }U\text{ of }x,\ \{n:x_n\in U\}\in p.
\]
Every ultrafilter on a compact Hausdorff space converges to a unique point.
An ultrafilter on $\N$ is free when it
contains the cofinite filter.

The circle is divisible and therefore injective in the category of Abelian
groups.  Consequently a homomorphism from a subgroup of an Abelian group to
$\T$ extends to the whole group
\cite[Theorem~21.1]{Fuchs1970}.
We write $\Hom(G,\T)$ for the set of group homomorphisms from an Abelian
group $G$ to $\T$.

As usual, $\Q^{(\cfrak)}=\{z\in\Q^{\cfrak}:|\supp(z)|<\infty\}$ denotes the
direct sum of $\cfrak$ copies of $\Q$, where
$\supp(z)=\{\xi<\cfrak:z(\xi)\neq0\}$ is the support of $z$.

We adopt the convention that, if $D\subseteq\cfrak$, then $\Q^{(D)}$
denotes the subgroup of $\Q^{(\cfrak)}$ consisting of all vectors supported
in $D$.  Thus, each element of $\Q^{(D)}$ is a function from $\cfrak$ into
$\Q$ that vanishes outside $D$.

For each $\xi<\cfrak$, let $e_\xi\in\Z^{(\cfrak)}$ denote the vector
supported at $\xi$ and defined by
\[
 e_\xi(\eta)=
 \begin{cases}
  1,&\text{if $\eta=\xi$},\\
  0,&\text{if $\eta\neq\xi$}.
 \end{cases}
\]

\begin{lemma}\label{lem:torsion-free-coordinatization}
Let $G$ be a torsion-free Abelian group of cardinality $\cfrak$.  Then $G$ may
be identified with a subgroup of $\Q^{(\cfrak)}$ in such a way that
\[
 \Z^{(\cfrak)}\leq G\leq\Q^{(\cfrak)}.
\]
In particular, for every $\xi<\cfrak$, the vector $e_\xi$ belongs to $G$.
\end{lemma}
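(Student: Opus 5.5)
We argue via the divisible hull and a maximal independent subset of $G$. Since $G$ is torsion-free, it embeds in its divisible hull $D(G)=G\otimes_{\Z}\Q$, which is a $\Q$-vector space. Choose a maximal $\Z$-linearly independent subset $B\subseteq G$, which exists by Zorn's lemma. Then $B$ is a $\Q$-basis of $D(G)$. Linear independence over $\Z$ in $G$ gives independence over $\Q$ after clearing denominators. Maximality means every $g\in G$ satisfies $ng\in\langle B\rangle$ for some $n\ge1$, so $B$ spans $D(G)$ over $\Q$.

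The key step is computing $|B|$. On one hand $|B|\le|G|=\cfrak$. On the other, every element of $G$ is determined by a finite subset $F\subseteq B$, an integer $n\ge1$, and an element $x$ of $\langle F\rangle$, via the unique solution of $ng=x$ in the torsion-free group $G$. Hence $|G|\le\max(|B|,\aleph_0)$. Since $|G|=\cfrak$ is uncountable, this forces $|B|=\cfrak$.

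Enumerate $B=\{b_\xi:\xi<\cfrak\}$ and define the $\Q$-linear isomorphism $\Phi:D(G)\to\Q^{(\cfrak)}$ by $\Phi(b_\xi)=e_\xi$. Restricting $\Phi$ to $G$ gives an injective homomorphism onto its image $\Phi(G)$. This image contains $\Phi(\langle B\rangle)=\Z^{(\cfrak)}$, because $\langle B\rangle\le G$ and $\langle B\rangle$ is free on $B$. Replacing $G$ by $\Phi(G)$ gives $\Z^{(\cfrak)}\le G\le\Q^{(\cfrak)}$ with $e_\xi=\Phi(b_\xi)\in G$, as required.

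There is no real obstacle here. The only step needing care is the cardinality count $|B|=\cfrak$, which is where uncountability of $\cfrak$ enters. One could also bypass the divisible hull and map $G$ into $\Q^{(B)}$ directly, sending $g$ to $\frac1n$ times the coordinates of $ng$; checking that this is well defined uses only torsion-freeness.
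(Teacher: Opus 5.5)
Your proposal is correct and follows essentially the paper's route: embed $G$ into $\Q\otimes_{\Z}G$, take a $\Q$-basis coming from $G$ (your maximal $\Z$-independent set $B$), note that it has size $\cfrak$, and send $b_\xi\mapsto e_\xi$. Your count $|G|\le\max(|B|,\aleph_0)$ just spells out the paper's one-line dimension claim. One small correction to your closing aside: well-definedness of $g\mapsto\frac1n(\text{coordinates of }ng)$ rests on the independence of $B$, and torsion-freeness is what gives injectivity.
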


\begin{proof}
The canonical homomorphism
\[
 G\longrightarrow\Q\otimes_{\Z}G,
 \qquad x\longmapsto1\otimes x,
\]
is injective because $G$ is torsion-free.  Its image spans the rational vector
space $\Q\otimes_{\Z}G$. As $\mathbb Q$ is countable, the dimension of this vector space is $\cfrak$ because $G$ has cardinality $\cfrak$.

Choose a basis $(1\otimes b_\xi)_{\xi<\cfrak}$ from the spanning set given by
the image of $G$, and identify $\Q\otimes_{\Z}G$ with
$\Q^{(\cfrak)}$ by sending $1\otimes b_\xi$ to $e_\xi$.  Under the resulting
embedding of $G$, each $b_\xi$ is identified with $e_\xi$, and hence
$\Z^{(\cfrak)}\leq G\leq\Q^{(\cfrak)}$.
\end{proof}

For a finite tuple $z=(z_0,\ldots,z_{m-1})$ in an Abelian group put
\[
 \Rel(z)=\left\{a\in\Z^m:\sum_{i<m} a_i z_i=0\right\},
 \qquad
 \htop(a)=\max\bigl(\{0\}\cup\{|a_i|:i<m\}\bigr).
\]

In a $T_1$ space, countable
compactness is equivalent to the assertion that every countably infinite
subset has an accumulation point
\cite[Theorem~3.10.3(v)]{Engelking1989}.  Also, if $u:\N\to X$ is injective,
$p$ is free, and $p\text{-}\lim_n u(n)=x$, then $x$ is an accumulation point
of $u[\N]$.

The following observation will be applied to show that the group topologies
we construct are countably compact.
This is a standard result and we include a proof for completeness.

\begin{lemma}\label{lem:topological-reduction}
Let $G$ be a Hausdorff topological group.  Suppose that for every injective
$s:\N\to G$ there exist a strictly increasing map $\varphi:\N\to\N$, a free
ultrafilter $p$ on $\N$, and a point $b\neq0$ such that
\begin{equation}\label{eq:nonzero-limit-property}
 p\text{-}\lim_n s(\varphi(n))=b.
\end{equation}
Then $G$ is countably compact and every convergent sequence in $G$ is
eventually constant.
\end{lemma}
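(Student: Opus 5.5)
The argument splits into two independent parts, one for each conclusion.

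\emph{Countable compactness.} Since $G$ is Hausdorff it is $T_1$, so by \cite[Theorem~3.10.3(v)]{Engelking1989} it suffices to show that every countably infinite $A\subseteq G$ has an accumulation point. Enumerate $A$ injectively as $s:\N\to G$. The hypothesis gives a strictly increasing $\varphi$, a free ultrafilter $p$, and a point $b$ with $p\text{-}\lim_n s(\varphi(n))=b$. The map $s\circ\varphi$ is injective, since it is a composition of injective maps. By the remark preceding the lemma, $b$ is therefore an accumulation point of $s[\varphi[\N]]\subseteq A$, and hence of $A$. Note that $b\neq0$ is not needed for this part.

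\emph{No nontrivial convergent sequences.} Suppose $(x_n)$ converges to $x$ but is not eventually constant; we derive a contradiction. Put $y_n=x_n-x$. Then $y_n\to0$, because translation is a homeomorphism, and $y_n\neq0$ for infinitely many $n$. First pass to a subsequence of nonzero terms. Next extract an injective subsequence. This is possible because each value $c\neq0$ can occur only finitely often: Hausdorffness gives a neighborhood of $0$ missing $c$, and all but finitely many $y_n$ lie in it. So some infinite set of indices carries pairwise distinct values, and we obtain an injective $s:\N\to G$ with $s(n)\to0$.

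Apply the hypothesis to get $\varphi$, $p$, and $b\neq0$ with $p\text{-}\lim_n s(\varphi(n))=b$. Since $\varphi$ is strictly increasing, $s(\varphi(n))\to0$ as well. Because $p$ is free, it contains all cofinite sets, so ordinary convergence to $0$ implies $p\text{-}\lim_n s(\varphi(n))=0$. In a Hausdorff space $p$-limits are unique: disjoint neighborhoods of $0$ and $b$ would force two disjoint sets to lie in $p$. Hence $b=0$, which is a contradiction.

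The only delicate point is the extraction of the injective subsequence in the second part, which relies on the Hausdorff finiteness argument above. Everything else is routine.
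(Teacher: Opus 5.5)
Your proof is correct and follows essentially the paper's argument. For countable compactness you use the accumulation-point remark on an injective enumeration. For the second conclusion you translate to $0$, pass to an injective subsequence, and contradict uniqueness of $p$-limits because $p$ is free. The only difference is organizational: you extract the injective subsequence directly, using the Hausdorff finiteness observation, whereas the paper first treats injective sequences and then reduces arbitrary ones by splitting into finite and infinite range.
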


\begin{proof}
Given a countably infinite set $A\subseteq G$, enumerate it injectively by
$s$ and apply the hypothesis.  It follows that $A$ has an accumulation point,
so $G$ is countably compact.

Now suppose that an injective sequence $s$ converges to $x$.  Define
$t:\N\to G$ by $t(n)=s(n)-x$.
The sequence $t$ is injective and converges to $0$.  Apply the hypothesis
to $t$, obtaining $\varphi,p$ and $b\neq0$.  Since $\varphi$ is strictly
increasing, $t\circ\varphi$ still converges to $0$, and since $p$ is
free, it follows that $p\text{-}\lim_n t(\varphi(n))=0$.  This contradicts
uniqueness of limits in a Hausdorff space, so $s$ cannot be injective.

Finally let $s$ be any sequence converging to $x$.  If its range is finite,
then
$s(n)=x$ eventually.  If its range is infinite, choose recursively a
strictly increasing subsequence with pairwise distinct values.  That
subsequence still converges to $x$, contradicting the preceding paragraph.
\end{proof}

Suppose now that $(\psi_j)_{j\in J}$ is a family of homomorphisms in
$\Hom(G,\T)$ that separates points: whenever $x\neq y$, there exists $j\in J$
such that $\psi_j(x)\neq\psi_j(y)$.  Define
\[
 \Delta:G\longrightarrow\T^J,
 \qquad \Delta(x)=(\psi_j(x))_{j\in J}.
\]
Give $G$ the coarsest topology that makes every $\psi_j$ continuous, that is,
the initial topology induced by the family $(\psi_j)_{j\in J}$. This is a
Hausdorff group topology, and $\Delta$ is a topological embedding
onto its image.  Thus, once these homomorphisms have been constructed, only
\eqref{eq:nonzero-limit-property} remains to obtain all the topological
conclusions.

%% file: sections/03-uniform-finite-approximation-by-homomorphisms.tex
\section{Uniform finite approximation by circle-valued homomorphisms}

A basic ingredient in the construction is the
following finite interpolation problem.  Given elements
$z_0,\ldots,z_{m-1}$ of an Abelian group and prescribed target values
$t_0,\ldots,t_{m-1}\in\T$, we seek a single homomorphism from the ambient
group to $\T$ whose value at each $z_i$ is ``close'' to $t_i$.

The following Kronecker-type lemma shows
that, for approximate realization to a fixed accuracy, it is enough to check
compatibility only with the relations whose coefficients are bounded by a
suitable finite constant.

\begin{lemma}[Uniform finite Kronecker lemma]\label{lem:uniform-kronecker}
For every integer $m\geq1$ and every $\varepsilon>0$ there exists $q\in\N$
with the following property.

For every Abelian group $G$, for every
$z=(z_i)_{i<m}\in G^m$, and for every
$t=(t_i)_{i<m}\in\T^m$, if for every relation
$a=(a_i)_{i<m}\in\Rel(z)$ with $\htop(a)\leq q$ we have
$\sum_{i<m}a_i t_i=0$, then there exists $\psi\in\Hom(G,\T)$ such that
\[
 \max_{i<m}\|\psi(z_i)-t_i\|<\varepsilon.
\]
\end{lemma}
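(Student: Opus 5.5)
We reduce to a statement about subgroups of $\Z^m$ and then argue by compactness. Fix $z$ and put $R=\Rel(z)\leq\Z^m$. The map $a\mapsto\sum_{i<m}a_iz_i$ identifies $\langle z_0,\dots,z_{m-1}\rangle$ with $\Z^m/R$. Homomorphisms $\langle z\rangle\to\T$ then correspond to points $x\in\T^m$ lying in the annihilator
\[
 R^\perp=\Bigl\{x\in\T^m:\textstyle\sum_{i<m}a_ix_i=0\text{ for all }a\in R\Bigr\}.
\]
Such a homomorphism sends $z_i\mapsto x_i$. Since $\T$ is injective (as recalled in Section~\ref{sec:preliminaries}), each such homomorphism extends to $\psi\in\Hom(G,\T)$. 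So it suffices to prove the following. For all $m$ and $\varepsilon$ there is $q$ such that, for every subgroup $R\leq\Z^m$ and every $t\in\T^m$ annihilated by $\{a\in R:\htop(a)\leq q\}$, the max-distance from $t$ to $R^\perp$ is $<\varepsilon$. This statement no longer mentions $G$, which is what makes $q$ uniform.

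Suppose this fails for some $m,\varepsilon$. Then for each $q$ there are $R_q\leq\Z^m$ and $t_q\in\T^m$ such that $t_q$ annihilates $R_q\cap[-q,q]^m$ but $d(t_q,R_q^\perp)\geq\varepsilon$. The space $\{0,1\}^{\Z^m}$ is compact, and so is the hyperspace of closed subsets of $\T^m$ with the Hausdorff metric. Pass to a subsequence along which the following hold: $R_q\to R$ pointwise, meaning each $a$ is eventually in $R_q$ or eventually out of it; $R_q^\perp\to K$ in the Hausdorff metric; and $t_q\to t$. Then $R$ is a subgroup of $\Z^m$ and $K$ is a closed subgroup of $\T^m$, since both properties pass to these limits. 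Each fixed $a\in R$ eventually lies in $R_q$ with $\htop(a)\leq q$, so $a\cdot t_q=0$ eventually, and hence $t\in R^\perp$. Also $d(t_q,R_q^\perp)\to d(t,K)$, so $d(t,K)\geq\varepsilon$. It therefore suffices to show $K=R^\perp$.

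The main step is the identity $K^\perp=R$. Once it holds, Pontryagin duality for closed subgroups of $\T^m$ gives $K=(K^\perp)^\perp=R^\perp\ni t$, which is a contradiction.
\begin{itemize}
\item[] \emph{Inclusion $R\subseteq K^\perp$.} If $a\in R$, then $a\in R_q$ eventually, so $a$ kills $R_q^\perp$. By continuity $a$ kills every Hausdorff limit point, hence $K$.
\item[] \emph{Inclusion $K^\perp\subseteq R$.} Let $a\notin R$. Then $a\notin R_q$ eventually. Since $R_q=(R_q^\perp)^\perp$, the character $x\mapsto a\cdot x$ is nontrivial on $R_q^\perp$. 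Its image is therefore a nontrivial subgroup of $\T$, and every nontrivial subgroup of $\T$ contains an element of norm $\geq1/3$. So we can choose $x_q\in R_q^\perp$ with $\|a\cdot x_q\|\geq1/3$. A limit point $x$ of the $x_q$ lies in $K$ and satisfies $\|a\cdot x\|\geq1/3$. Hence $a\notin K^\perp$.
\end{itemize}

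The obstacle is the uniformity over all $G$, and this argument handles it. The $1/3$ trick prevents the witnesses $a\cdot x_q$ from degenerating to $0$ in the limit. The rest is routine compactness and duality.
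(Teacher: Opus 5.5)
Your proof is correct, and for the core step it takes a genuinely different route from the paper's. The reduction is the same. The paper also passes to $R=\Rel(z)$, looks for a point of $H_R$ (your $R^\perp$) within $\varepsilon$ of $t$, and extends by injectivity of $\T$; its internal claim is exactly your intermediate statement.

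The paper then proves that claim directly. Arzel\`a--Ascoli and Stone--Weierstrass give one finite set $S\subseteq\Z^m$ of frequencies that approximates every $1$-Lipschitz $f:\T^m\to[0,1/2]$ within $\varepsilon/4$, and $q$ is the largest height in $S$. Averaging the approximant of $u\mapsto\rho_\infty(u,H_R)$ over Haar measure on $H_R$ kills the frequencies outside $R$. The short relations then force the average to agree at $t$ and at $0$, which gives $\rho_\infty(t,H_R)<\varepsilon/2$.

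You instead argue by contradiction, using sequential compactness of triples (subgroup of $\Z^m$, nonempty closed subset of $\T^m$, point). Your steps check out:
\begin{itemize}
\item the limits $R$ and $K$ are subgroups;
\item the estimate $|d(x,C)-d(y,C')|\le\rho_\infty(x,y)+d_H(C,C')$, which you use implicitly, gives $d(t,K)\ge\varepsilon$;
\item every nontrivial subgroup of $\T$ does contain an element of norm at least $1/3$, and this is exactly what keeps the nontriviality of $a$ on $R_q^\perp$ from degenerating in the limit.
\end{itemize}

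The cost is in the duality you invoke. The paper only needs the elementary half, $(R^\perp)^\perp=R$ for subgroups $R\le\Z^m$, which it proves from injectivity of $\T$; you also use this half for $R_q$. Your limit $K$, however, is an arbitrary closed subgroup of $\T^m$, so you also need the other half, $K=(K^\perp)^\perp$. That is, characters of $\T^m/K$ must separate points. This is a standard theorem (the annihilator theorem for locally compact Abelian groups, or a structure argument for closed subgroups of $\mathbb R^m$), but it is not elementary. Its usual proofs rest on Peter--Weyl or on Haar-measure arguments of the kind the paper uses directly.

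What you gain is a shorter, conceptually transparent argument: the annihilation conditions are preserved under these limits. What the paper gains is self-containedness and a $q$ tied to an explicit trigonometric approximation problem, which could in principle be made quantitative. Your argument gives no information about $q$.
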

We note that the integer $q$ is independent of $G,z$, and $t$.

We dedicate this section to proving Lemma~\ref{lem:uniform-kronecker}.
First, we isolate the analytic ingredients that will be needed.  For each
integer $m\geq1$, on $\T^m$ we use the maximum metric
\[
 \rho_\infty(u,v)=\max_{i<m}\|u_i-v_i\|.
\]
Let $\mathbb S^1=\{w\in\mathbb C:|w|=1\}$.  The homomorphism
$r\mapsto e^{2\pi i r}$ from $\mathbb R$ onto $\mathbb S^1$ has kernel
$\mathbb Z$, so it induces the continuous group isomorphism
\[
 E:\T=\mathbb R/\mathbb Z\longrightarrow\mathbb S^1,
 \qquad E(r+\mathbb Z)=e^{2\pi i r}.
\]
For $a=(a_i)_{i<m}\in\Z^m$, define the continuous homomorphism
\[
 \phi_a:\T^m\longrightarrow\mathbb S^1,
 \qquad \phi_a(u)=E\left(\sum_{i<m}a_i u_i\right).
\]

The only compactness result for families of functions that we need is the
following compact-metric form of Arzel\`a--Ascoli.  For a proof, see, e.g.,
\cite[Theorem~8.2.10]{Engelking1989}.

\begin{lemma}[Arzel\`a--Ascoli]\label{lem:arzela-ascoli-uniform}
Let $X$ be a compact metric space.  If
$\mathcal F\subseteq C(X,\mathbb R)$ is equicontinuous and uniformly bounded,
then its closure in the uniform norm is compact.
\end{lemma}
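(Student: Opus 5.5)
We prove this by the standard route: show that $\mathcal F$ is totally bounded in the uniform metric $d(f,g)=\sup_{x\in X}|f(x)-g(x)|$ on $C(X,\mathbb R)$. Since $X$ is compact, every continuous real function on $X$ is bounded, so $d$ is a genuine metric. It is complete, because a uniform limit of continuous functions is continuous. In a complete metric space the closure of a totally bounded set is complete and totally bounded, hence compact. So the whole proof reduces to total boundedness of $\mathcal F$.

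\textbf{Step 1: a finite set of test points.} Fix $\varepsilon>0$. For each $x\in X$, equicontinuity at $x$ gives $\delta_x>0$ such that $|f(y)-f(x)|<\varepsilon/4$ for every $f\in\mathcal F$ and every $y$ in the open ball $B(x,\delta_x)$. The balls $B(x,\delta_x)$ cover $X$. By compactness, finitely many of them, with centers $x_0,\ldots,x_{k-1}$, already cover $X$. This is the step where compactness of $X$ enters. It is the main point of the argument: it turns pointwise equicontinuity into control of every $f$ by finitely many values.

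\textbf{Step 2: finitely many cells.} Let $M$ be a uniform bound for $\mathcal F$, and consider the map
\[
 \Phi:\mathcal F\to[-M,M]^k,\qquad \Phi(f)=(f(x_j))_{j<k}.
\]
Partition the cube $[-M,M]^k$ into finitely many cells on which every coordinate varies by less than $\varepsilon/4$. For each cell $C$, the preimage $\Phi^{-1}[C]$ is a subset of $\mathcal F$, and there are only finitely many such sets.

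\textbf{Step 3: each cell has small diameter.} Let $f,g\in\Phi^{-1}[C]$ and $y\in X$. Choose $j$ with $y\in B(x_j,\delta_{x_j})$. Then
\[
 |f(y)-g(y)|\le|f(y)-f(x_j)|+|f(x_j)-g(x_j)|+|g(x_j)-g(y)|<\tfrac{3\varepsilon}{4}.
\]
Taking the supremum over $y$ gives $d(f,g)\le 3\varepsilon/4<\varepsilon$.

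Thus $\mathcal F$ is covered by finitely many sets of diameter less than $\varepsilon$. Since $\varepsilon$ was arbitrary, $\mathcal F$ is totally bounded, and by the first paragraph its uniform closure is compact.
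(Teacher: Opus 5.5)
Your proof is correct. It is the standard self-contained argument: equicontinuity and compactness of $X$ reduce each $f\in\mathcal F$ to finitely many sample values $f(x_0),\dots,f(x_{k-1})$. Gridding $[-M,M]^k$ then covers $\mathcal F$ by finitely many sets of uniform diameter at most $3\varepsilon/4$. Finally, completeness of $(C(X,\mathbb R),d)$ upgrades total boundedness of $\mathcal F$ to compactness of its closure.

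The paper gives no argument of its own here; it imports the result from \cite[Theorem~8.2.10]{Engelking1989}. The citation keeps the exposition short. Your route is elementary, and it shows something slightly sharper that matches how the lemma is actually used. In the proof of Lemma~\ref{lem:uniform-frequency-set}, the only consequence of compactness invoked is that $\mathcal L_m$ is covered by finitely many uniform balls of radius $\eta/2$ centered at members of $\mathcal L_m$. That is exactly total boundedness. Your Steps~1--3 deliver it directly: take $\varepsilon=\eta/2$ and pick one function from each nonempty cell $\Phi^{-1}[C]$. So for that application, neither the completeness of $C(\T^m,\mathbb R)$ nor the closedness of $\mathcal L_m$ under uniform limits is needed.
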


For the following standard complex form of the Stone--Weierstrass theorem,
see \cite[Theorem~7.33]{Rudin1976}.  Recall that a subset
$\mathcal A\subseteq C(X,\mathbb C)$ is uniformly dense in
$C(X,\mathbb C)$ if it is dense in the topology induced by the uniform norm
$\|\cdot\|_\infty$.

\begin{theorem}[Complex Stone--Weierstrass theorem]
\label{thm:complex-stone-weierstrass}
Let $X$ be a compact Hausdorff space, and let
$\mathcal A\subseteq C(X,\mathbb C)$ be a complex subalgebra.  If
$\mathcal A$ contains the constant functions, separates the points of $X$,
and is closed under complex conjugation, then $\mathcal A$ is uniformly dense
in $C(X,\mathbb C)$.
\end{theorem}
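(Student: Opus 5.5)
The plan is to deduce the complex statement from the real Stone--Weierstrass theorem, and to prove the real version by the usual lattice argument.

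\emph{Reduction to the real case.} Let $\mathcal A_{\mathbb R}=\mathcal A\cap C(X,\mathbb R)$. Since $\mathcal A$ is closed under conjugation, for each $f\in\mathcal A$ both $\operatorname{Re}f=(f+\bar f)/2$ and $\operatorname{Im}f=(f-\bar f)/(2i)$ lie in $\mathcal A_{\mathbb R}$.
\begin{enumerate}
\item $\mathcal A_{\mathbb R}$ is a real subalgebra containing the real constants.
\item It separates points: if $f(x)\neq f(y)$, then $\operatorname{Re}f$ or $\operatorname{Im}f$ already distinguishes $x$ from $y$.
\end{enumerate}
Suppose $\mathcal A_{\mathbb R}$ is uniformly dense in $C(X,\mathbb R)$. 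Take $g=u+iv\in C(X,\mathbb C)$ and choose $u',v'\in\mathcal A_{\mathbb R}$ with $\|u-u'\|_\infty,\|v-v'\|_\infty<\varepsilon/2$. Then $u'+iv'\in\mathcal A$ lies within $\varepsilon$ of $g$. So it suffices to prove the real theorem.

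\emph{Real theorem.} Let $\mathcal B$ be the uniform closure of $\mathcal A_{\mathbb R}$. It is again a subalgebra, closed under uniform limits.
\begin{enumerate}
\item \emph{$\mathcal B$ is a lattice.} Let $f\in\mathcal B$ with $\|f\|_\infty\le M$. Approximate $|t|$ uniformly on $[-M,M]$ by polynomials $P_n$ with $P_n(0)=0$. One way is to approximate $\sqrt s$ on $[0,1]$ by the monotone recursion $p_{n+1}(s)=p_n(s)+\tfrac12\bigl(s-p_n(s)^2\bigr)$, $p_0=0$, and apply Dini's theorem. Then $P_n\circ f\in\mathcal B$ converges uniformly to $|f|$, so $|f|\in\mathcal B$. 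Hence $\max(f,g)=\tfrac12(f+g+|f-g|)$ and $\min(f,g)$ also lie in $\mathcal B$.
\item \emph{Two-point interpolation.} For $x\neq y$ and reals $\alpha,\beta$, take $h\in\mathcal A_{\mathbb R}$ with $h(x)\neq h(y)$. The function $\alpha+(\beta-\alpha)\frac{h-h(x)}{h(y)-h(x)}$ lies in $\mathcal A_{\mathbb R}$ and takes the values $\alpha,\beta$ at $x,y$. When $x=y$, a constant suffices.
\item \emph{Compactness argument.} Fix $f\in C(X,\mathbb R)$, $\varepsilon>0$ and $x\in X$.
 \begin{itemize}
 \item For each $y$ choose $g_y\in\mathcal B$ with $g_y(x)=f(x)$ and $g_y(y)=f(y)$. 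The open sets $\{g_y<f+\varepsilon\}$ cover $X$.
 \item Take a finite subcover, indexed by $y_1,\dots,y_k$, and set $h_x=\min_j g_{y_j}\in\mathcal B$. Then $h_x<f+\varepsilon$ everywhere and $h_x(x)=f(x)$.
 \item The open sets $\{h_x>f-\varepsilon\}$ cover $X$. Take a finite subcover, indexed by $x_1,\dots,x_l$, and set $h=\max_i h_{x_i}\in\mathcal B$.
 \end{itemize}
 Then $\|h-f\|_\infty<\varepsilon$, so $\mathcal B=C(X,\mathbb R)$.
\end{enumerate}

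\emph{Main obstacle.} The only genuinely analytic step is the approximation of $|t|$ by polynomials vanishing at $0$. I would handle it with the explicit recursion for $\sqrt s$ together with Dini's theorem. This requires checking that $0\le p_n(s)\le\sqrt s$ and that the sequence is increasing, which follows from the factorization
\[
\sqrt s-p_{n+1}(s)=\bigl(\sqrt s-p_n(s)\bigr)\Bigl(1-\tfrac12\bigl(\sqrt s+p_n(s)\bigr)\Bigr).
\]
Once this is in place, the rest is routine compactness and algebra.
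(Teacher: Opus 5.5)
Your proof is correct. The paper gives no proof of its own and simply cites this result as Theorem~7.33 of Rudin. Your argument is essentially the standard proof found there. Rudin deduces the complex case from the real version (his Theorem~7.32) by passing to real and imaginary parts of a conjugation-closed algebra, as you do. He proves the real case exactly by your three steps: the uniform closure is a lattice, two-point interpolation, and the double compactness argument with a finite $\min$ and then a finite $\max$ (in the opposite order, which is immaterial).

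The one genuine divergence is how you approximate $|t|$. Rudin gets real polynomials $P_n$ with $P_n(0)=0$ and $P_n(t)\to|t|$ uniformly on $[-M,M]$ as a corollary of the Weierstrass approximation theorem, which he proves by convolution with polynomial kernels. You instead use the explicit recursion $p_{n+1}=p_n+\tfrac12(s-p_n^2)$ for $\sqrt s$ together with Dini's theorem. Your factorization does give $0\le p_n\le p_{n+1}\le\sqrt s$ on $[0,1]$, so the pointwise limit $L$ satisfies $L^2=s$, hence $L=\sqrt s$, and $P_n(t)=M\,p_n(t^2/M^2)$ works. This makes your proof self-contained at the cost of a short computation, whereas the citation buys the paper brevity.

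Two side remarks:
\begin{enumerate}
\item The normalization $P_n(0)=0$ is superfluous in your setting, because the closure already contains the constants. It matters only under Rudin's weaker hypothesis that the algebra vanishes at no point.
\item Your argument uses only compactness of $X$, so it covers the compact Hausdorff generality stated here directly. The paper in any case only needs $X=\mathbb T^m$.
\end{enumerate}
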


\begin{lemma}
\label{lem:trigonometric-stone-weierstrass}
For every integer $m\geq1$, the complex linear span of
$\{\phi_a:a\in\Z^m\}$ is uniformly dense in $C(\T^m,\mathbb C)$.
\end{lemma}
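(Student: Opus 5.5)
We will apply Theorem~\ref{thm:complex-stone-weierstrass} with $X=\T^m$ and $\mathcal A$ the complex linear span of $\{\phi_a:a\in\Z^m\}$. First, $X$ is compact Hausdorff: $\T$ is the continuous image of $[0,1]$, it is Hausdorff as a metric space, and $\T^m$ carries the product topology, which is induced by $\rho_\infty$. Each $\phi_a$ is continuous, being the composition of the continuous homomorphism $u\mapsto\sum_{i<m}a_iu_i$ with the continuous map $E$. Hence $\mathcal A\subseteq C(\T^m,\mathbb C)$.

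Next we check the algebraic hypotheses. Since $E$ is a homomorphism into $\mathbb S^1$, we have
\[
 \phi_a\phi_b=\phi_{a+b},\qquad \overline{\phi_a}=\phi_{-a},\qquad \phi_0\equiv1 .
\]
The first identity shows that the span is closed under products, since a product of two finite linear combinations expands bilinearly into a combination of characters. So $\mathcal A$ is a complex subalgebra. The third identity shows that $\mathcal A$ contains the constants. The second, together with conjugate-linearity of conjugation, shows that $\mathcal A$ is closed under complex conjugation.

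For point separation, let $u\neq v$ in $\T^m$. Choose $i<m$ with $u_i\neq v_i$ and let $a=e_i$ be the $i$-th standard unit vector in $\Z^m$. Then $\phi_a(u)=E(u_i)$ and $\phi_a(v)=E(v_i)$. These differ because $E$ is injective: it is induced from a map $\mathbb R\to\mathbb S^1$ whose kernel is exactly $\Z$.

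Theorem~\ref{thm:complex-stone-weierstrass} then gives uniform density. No step is a real obstacle. The only point needing care is the closure of the span under multiplication, and it follows immediately from the character identity above.
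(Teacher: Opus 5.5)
Your proposal is correct and follows the paper's route: apply the complex Stone--Weierstrass theorem to the span of the characters $\phi_a$, using $\phi_0=1$, $\phi_a\phi_b=\phi_{a+b}$ and $\overline{\phi_a}=\phi_{-a}$, and separate points with $\phi_{e_j}$. Your extra checks (compactness of $\T^m$, continuity of each $\phi_a$, injectivity of $E$) only spell out details the paper leaves implicit.
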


\begin{proof}
Their complex linear span is a unital algebra because
$\phi_0=1$ and $\phi_a\phi_b=\phi_{a+b}$, and it is closed under complex
conjugation because $\overline{\phi_a}=\phi_{-a}$.  It also separates
points: if $u\neq v$, then $u_j\neq v_j$ for some $j<m$, and the homomorphism
$\phi_{e_j}$, where $e_j$ is the $j$th standard basis vector, distinguishes
$u$ and $v$.
Theorem~\ref{thm:complex-stone-weierstrass} now applies.
\end{proof}

We will call each element of the span of $\{\phi_a:a\in\Z^m\}$ a
\emph{trigonometric polynomial} on $\T^m$.

We now combine these results to obtain the lemma below.
In its statement, $1$-Lipschitz means Lipschitz with constant $1$ with respect
to the metric $\rho_\infty$ on $\T^m$ and the usual metric on $\mathbb R$.

\begin{lemma}\label{lem:uniform-frequency-set}
For every integer $m\geq1$ and every $\eta>0$, there exists a finite set
$S\subseteq\Z^m$ such that for every $1$-Lipschitz function
$f:\T^m\to[0,1/2]$ there exist complex coefficients $(c_a)_{a\in S}$
satisfying
\[
 \left\|f-\sum_{a\in S}c_a\phi_a\right\|_\infty<\eta.
\]
\end{lemma}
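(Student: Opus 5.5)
We combine Arzel\`a--Ascoli (Lemma~\ref{lem:arzela-ascoli-uniform}) with the density of trigonometric polynomials (Lemma~\ref{lem:trigonometric-stone-weierstrass}) through a finite-net compactness argument. Let
\[
\mathcal F=\{f:\T^m\to[0,1/2] : f \text{ is $1$-Lipschitz for }\rho_\infty\}.
\]
This family is uniformly bounded by $1/2$ and equicontinuous, since the single modulus $\delta=\epsilon$ works for every member. Because $\T^m$ with $\rho_\infty$ is a compact metric space, Lemma~\ref{lem:arzela-ascoli-uniform} shows that the uniform closure of $\mathcal F$ in $C(\T^m,\mathbb R)$ is compact. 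In fact $\mathcal F$ is itself closed, because uniform limits preserve both the Lipschitz bound and the range, though we do not need this. In particular $\mathcal F$ is totally bounded. We therefore choose finitely many $f_1,\dots,f_N\in\mathcal F$ such that every $f\in\mathcal F$ satisfies $\|f-f_k\|_\infty<\eta/2$ for some $k$.

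Next we approximate each net element by a trigonometric polynomial. By Lemma~\ref{lem:trigonometric-stone-weierstrass}, viewing $f_k$ as a complex-valued continuous function, there is a finite set $S_k\subseteq\Z^m$ and coefficients $(c^{(k)}_a)_{a\in S_k}$ with
\[
\Bigl\|f_k-\sum_{a\in S_k}c^{(k)}_a\phi_a\Bigr\|_\infty<\eta/2.
\]
Put $S=S_1\cup\dots\cup S_N$, which is finite. Each approximation extends to $S$ by setting the coefficients outside $S_k$ to zero.

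Given any $1$-Lipschitz $f:\T^m\to[0,1/2]$, we pick $k$ with $\|f-f_k\|_\infty<\eta/2$ and use the coefficients $c^{(k)}$. The triangle inequality then gives
\[
\Bigl\|f-\sum_{a\in S}c^{(k)}_a\phi_a\Bigr\|_\infty\le\|f-f_k\|_\infty+\Bigl\|f_k-\sum_{a\in S}c^{(k)}_a\phi_a\Bigr\|_\infty<\eta,
\]
which is the required bound.

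The only point needing care is the uniformity of $S$ over all $f$, and the finite $\eta/2$-net handles exactly that. The rest is routine: one checks that the Lipschitz condition yields equicontinuity with respect to $\rho_\infty$, and that real-valued functions sit inside $C(\T^m,\mathbb C)$ so that the complex density statement applies to them. Total boundedness of a relatively compact set in a metric space is standard, and it is the step that produces the finite net.
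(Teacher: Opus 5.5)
Your proof is correct and follows essentially the same route as the paper. Both arguments use compactness of the $1$-Lipschitz family via Arzel\`a--Ascoli, take a finite $\eta/2$-net with centers in the family, approximate each center by Lemma~\ref{lem:trigonometric-stone-weierstrass}, form the union of the finitely many frequency sets, and finish with the triangle inequality. The only cosmetic difference is that the paper uses closedness of the family to get compactness and extracts a finite subcover, whereas you pass through total boundedness of the relatively compact family; this works equally well.
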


\begin{proof}
Let $\mathcal L_m$ be the set of all such functions, regarded as a subset of
$C(\T^m,\mathbb R)$.  This family is uniformly bounded, equicontinuous (as
all its members are $1$-Lipschitz), and closed under uniform limits.  Hence it
is closed in $C(\T^m,\mathbb R)$ and compact by
Lemma~\ref{lem:arzela-ascoli-uniform}.

For each $f\in\mathcal L_m$, use
Lemma~\ref{lem:trigonometric-stone-weierstrass} to choose a trigonometric
polynomial $P_f$ such that $\|f-P_f\|_\infty<\eta/2$.  The uniform balls of
radius $\eta/2$ centered at the members of $\mathcal L_m$ cover
$\mathcal L_m$; choose a finite subcover with centers
$f_0,\ldots,f_N$.
For each $j\leq N$, choose a finite set $S_j\subseteq\Z^m$ such that
$P_{f_j}\in\operatorname{span}\{\phi_a:a\in S_j\}$, and set
$S=\bigcup_{j\leq N} S_j$.

If $f$ belongs to the ball centered at $f_j$, then
\[
 \|f-P_{f_j}\|_\infty
 \leq \|f-f_j\|_\infty+\|f_j-P_{f_j}\|_\infty<\eta.
\]
This proves
the assertion.
\end{proof}

For an integer $m\geq1$ and a subgroup $R\leq\Z^m$, put
\[
 H_R=\left\{u\in\T^m:\sum_{i<m}a_i u_i=0
                    \text{ for every }a\in R\right\}.
\]

Recall that if $H$ is a compact Hausdorff Abelian topological group, its
\emph{normalized Haar measure} is the unique regular Borel measure $\mu$
on $H$ such that $\mu(H)=1$ and
$\mu(x+B)=\mu(B)$ for every $x\in H$ and every Borel set $B\subseteq H$.
Thus $\mu$ is a probability measure invariant under translations; in
particular, for every continuous $g:H\to\mathbb C$ and every $x\in H$,
\[
 \int_H g(x+h)\,d\mu(h)=\int_H g(h)\,d\mu(h).
\]
Existence and uniqueness are standard; see
\cite[Chapter~VII, \S\S~2--3]{Katznelson2004}.

\begin{lemma}
\label{lem:annihilator-haar-average}
For every integer $m\geq1$ and every subgroup $R\leq\Z^m$, the set $H_R$ is
a compact subgroup of $\T^m$, and
\begin{equation}\label{eq:double-annihilator}
 \{a\in\Z^m:\phi_a(h)=1\text{ for every }h\in H_R\}=R.
\end{equation}
If $\mu_R$ is normalized Haar measure on $H_R$, then, for every
$a\in\Z^m$ and $s\in\T^m$,
\begin{equation}\label{eq:haar-homomorphism-average}
 \int_{H_R}\phi_a(s+h)\,d\mu_R(h)
 =\begin{cases}
   \phi_a(s),&a\in R,\\
   0,&a\notin R.
  \end{cases}
\end{equation}
\end{lemma}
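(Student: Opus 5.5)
The plan is to treat $H_R$ as an intersection of kernels and to get both \eqref{eq:double-annihilator} and \eqref{eq:haar-homomorphism-average} from Pontryagin-type duality for the finitely generated group $\Z^m/R$, avoiding any general duality theorem.

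\emph{Compactness.} The map $a\cdot u=\sum_{i<m}a_iu_i$ from $\T^m$ to $\T$ is a continuous homomorphism for each $a\in R$. Hence
\[
 H_R=\bigcap_{a\in R}\ker\bigl(u\mapsto a\cdot u\bigr)
\]
is a closed subgroup of the compact group $\T^m$, and is therefore compact.

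\emph{The annihilator identity \eqref{eq:double-annihilator}.} The inclusion $\supseteq$ is immediate from the definitions. For $\subseteq$, I would identify $H_R$ with $\Hom(\Z^m/R,\T)$: a point $u\in\T^m$ gives the homomorphism $\Z^m\to\T$, $a\mapsto a\cdot u$, and every homomorphism $\Z^m\to\T$ arises this way. Such a homomorphism vanishes on $R$ exactly when $u\in H_R$. Now take $a\notin R$, so that $a+R$ is a nonzero element of $Q=\Z^m/R$. It suffices to find $\chi\in\Hom(Q,\T)$ with $\chi(a+R)\neq0$.
\begin{enumerate}
\item If $a+R$ has infinite order, define $\chi$ on the cyclic subgroup $\langle a+R\rangle$ by $a+R\mapsto 1/2+\Z$.
\item If $a+R$ has finite order $n>1$, define $\chi$ on $\langle a+R\rangle$ by $a+R\mapsto 1/n+\Z$.
\end{enumerate}
In both cases the map is well defined on the cyclic subgroup and sends $a+R$ to a nonzero element of $\T$. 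By injectivity of $\T$ (the extension result cited in Section~\ref{sec:preliminaries}), $\chi$ extends to all of $Q$. Pulling $\chi$ back to $\Z^m$ gives a point $h\in H_R$ with $a\cdot h\neq0$, that is, $\phi_a(h)\neq1$. This step is the only one with real content, and injectivity of $\T$ makes it routine.

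\emph{The Haar average \eqref{eq:haar-homomorphism-average}.} Since $\phi_a$ is a homomorphism,
\[
 \int_{H_R}\phi_a(s+h)\,d\mu_R(h)=\phi_a(s)\,I,\qquad I=\int_{H_R}\phi_a(h)\,d\mu_R(h).
\]
\begin{enumerate}
\item If $a\in R$, then $\phi_a\equiv1$ on $H_R$, so $I=1$ because $\mu_R$ is a probability measure.
\item If $a\notin R$, step two provides $h_0\in H_R$ with $\phi_a(h_0)\neq1$. Translation invariance of $\mu_R$ gives
\[
 I=\int_{H_R}\phi_a(h_0+h)\,d\mu_R(h)=\phi_a(h_0)\,I.
\]
Hence $(1-\phi_a(h_0))I=0$, which forces $I=0$.
\end{enumerate}
This yields \eqref{eq:haar-homomorphism-average}.
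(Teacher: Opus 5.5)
Your proposal is correct and follows the paper's proof essentially step by step. Both arguments show $H_R$ is closed as an intersection of kernels, obtain the nontrivial inclusion of the annihilator identity from a character of $\Z^m/R$ defined on $\langle a+R\rangle$, extended by injectivity of $\T$ and read off as $u_i=\chi(e_i+R)$, and kill the Haar average for $a\notin R$ by translation invariance. Your case split (infinite order versus finite order $n>1$) only spells out the definition on the cyclic subgroup that the paper leaves implicit.
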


\begin{proof}
The set $H_R$ is an intersection of kernels of continuous homomorphisms
from $\T^m$ to $\T$, so it is a closed subgroup of the compact group
$\T^m$.  The inclusion from right to left in
\eqref{eq:double-annihilator} follows from the definition of $H_R$.

For the reverse inclusion, let $\pi:\Z^m\to\Z^m/R$ be the quotient map and
suppose that $a\notin R$.  Since $\pi(a)\neq0$, there is a homomorphism
$\phi:\Z^m/R\to\T$ such that $\phi(\pi(a))\neq0$: first define it on
$\langle\pi(a)\rangle$ and then use the injectivity of $\T$ to extend it.
For $i<m$, put $u_i=\phi(\pi(e_i))$.  If $r=(r_i)_{i<m}\in R$, then
\[
 \sum_{i<m}r_i u_i
 =\phi\left(\pi\left(\sum_{i<m}r_i e_i\right)\right)
 =\phi(\pi(r))=0,
\]
so $u=(u_i)_{i<m}\in H_R$.  On the other hand,
\[
 \sum_{i<m}a_i u_i=\phi(\pi(a))\neq0.
\]
Thus $\phi_a(u)\neq1$, proving \eqref{eq:double-annihilator}.

Put
\[
 I_a=\int_{H_R}\phi_a(h)\,d\mu_R(h).
\]
If $a\in R$, then $\phi_a$ is identically $1$ on $H_R$, so $I_a=1$.  If
$a\notin R$, identity \eqref{eq:double-annihilator} gives
$h_0\in H_R$ with $\phi_a(h_0)\neq1$.  Translation invariance gives
\[
 I_a=\int_{H_R}\phi_a(h_0+h)\,d\mu_R(h)=\int_{H_R}\phi_a(h_0)\phi_a(h)\,d\mu_R(h)=\phi_a(h_0)I_a,
\]
and hence $I_a=0$.  Finally,
$\phi_a(s+h)=\phi_a(s)\phi_a(h)$, so
\[
  \int_{H_R}\phi_a(s+h)\,d\mu_R(h)=\phi_a(s)I_a,
\]
which proves \eqref{eq:haar-homomorphism-average}.
\end{proof}

Now we are ready to prove Lemma~\ref{lem:uniform-kronecker}.

\begin{proof}[Proof of Lemma~\ref{lem:uniform-kronecker}]
Set $\eta=\varepsilon/4$, and let $S\subseteq\Z^m$ be supplied by
Lemma~\ref{lem:uniform-frequency-set}.
Put
\[
 q=\max\bigl(\{1\}\cup\{\htop(a):a\in S\}\bigr).
\]

\begin{claim}
For every subgroup $R\leq\Z^m$ and every
$t\in\T^m$, if
\[
 \sum_{i<m}a_i t_i=0
 \qquad\text{for every }a\in R\cap[-q,q]^m,
\]
then $\rho_\infty(t,H_R)<\varepsilon$.
\end{claim}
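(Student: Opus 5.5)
Argue by contradiction, testing a distance function against Haar averaging over $H_R$. Suppose $\rho_\infty(t,H_R)\geq\varepsilon$, and define
\[
 f(u)=\min\bigl(\rho_\infty(u,H_R),\,1/2\bigr)\qquad(u\in\T^m).
\]
Because the distance to a set is $1$-Lipschitz and truncation preserves this, $f$ is a $1$-Lipschitz map $\T^m\to[0,1/2]$. It vanishes on $H_R$. Since $H_R$ is a subgroup, $f$ is $H_R$-invariant: $f(u+h)=f(u)$ for $h\in H_R$. Also $f(t)\geq\min(\varepsilon,1/2)$. Here I use $\varepsilon\le 1/2$, which is harmless: every point of $\T^m$ lies within $1/2$ of $0\in H_R$, so for $\varepsilon>1/2$ the claim is trivial. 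Lemma~\ref{lem:uniform-frequency-set}, with $\eta=\varepsilon/4$ and the set $S$ fixed above, gives a trigonometric polynomial $P=\sum_{a\in S}c_a\phi_a$ with $\|f-P\|_\infty<\varepsilon/4$.

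Next average $P$ over cosets of $H_R$. For $s\in\T^m$ put $\tilde P(s)=\int_{H_R}P(s+h)\,d\mu_R(h)$. By \eqref{eq:haar-homomorphism-average},
\[
 \tilde P=\sum_{a\in S\cap R}c_a\phi_a .
\]
Invariance of $f$ gives $\int_{H_R} f(s+h)\,d\mu_R(h)=f(s)$. Since $\mu_R$ is a probability measure, $\|f-\tilde P\|_\infty\le\|f-P\|_\infty<\varepsilon/4$.

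The key point is that $\tilde P$ cannot tell $t$ apart from $0$. Every $a\in S\cap R$ has $\htop(a)\le q$, so $a\in R\cap[-q,q]^m$. The hypothesis then gives $\sum_i a_it_i=0$, so $\phi_a(t)=1=\phi_a(0)$. Hence $\tilde P(t)=\tilde P(0)$. But $f(0)=0$ (as $0\in H_R$) and $f(t)\ge\varepsilon$, so
\[
 \varepsilon\le |f(t)-f(0)|\le |f(t)-\tilde P(t)|+|\tilde P(0)-f(0)|<\varepsilon/2,
\]
a contradiction. Therefore $\rho_\infty(t,H_R)<\varepsilon$.

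The only delicate steps are the two small checks above: the $H_R$-invariance of $f$, which uses that $H_R$ is a subgroup and that $\rho_\infty$ is translation invariant, and the reduction to $\varepsilon\le1/2$ so that the truncation at $1/2$ does not interfere. The rest is a direct combination of Lemma~\ref{lem:uniform-frequency-set} and Lemma~\ref{lem:annihilator-haar-average}.
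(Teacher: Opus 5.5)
Your proof is correct and follows essentially the same route as the paper: you approximate the $1$-Lipschitz function $\rho_\infty(\cdot,H_R)$ by a trigonometric polynomial with frequencies in $S$, average over $H_R$ to discard the frequencies outside $R$, and use the hypothesis to get $\tilde P(t)=\tilde P(0)$. The contradiction framing, the truncation at $1/2$ and the reduction to $\varepsilon\le 1/2$ are all unnecessary, since $\rho_\infty(u,H_R)\le\rho_\infty(u,0)\le 1/2$ already (as you note); the paper instead concludes directly that $d(t)<2\eta<\varepsilon$.
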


\begin{proof}[Proof of the claim]
Fix a subgroup $R\leq\Z^m$ and $t\in\T^m$ such that
\begin{equation}\label{eq:short-annihilation}
 \sum_{i<m}a_i t_i=0\text{ for every }a\in R\cap[-q,q]^m.
\end{equation}
Define $d:\T^m\to\mathbb R$ by
\[
 d(u)=\rho_\infty(u,H_R)=\inf_{h\in H_R}\rho_\infty(u,h).
\]
The function $d$ is $1$-Lipschitz.
Moreover, $0\in H_R$, and hence
\[
 0\leq d(u)\leq\rho_\infty(u,0)\leq\frac12
 \qquad(u\in\T^m).
\]
Lemma~\ref{lem:uniform-frequency-set}, applied to $d$, gives coefficients
$(c_a)_{a\in S}\in\mathbb C^S$ such that the trigonometric polynomial
\[
 P(u):=\sum_{a\in S}c_a\phi_a(u)
\]
satisfies $\|P-d\|_\infty<\eta$.

For $s\in\T^m$, set
\[
 A(s)=\int_{H_R}P(s+h)\,d\mu_R(h).
\]
Equation~\eqref{eq:haar-homomorphism-average} yields
\begin{equation}\label{eq:average-polynomial}
 A(s)=\sum_{a\in S\cap R}c_{a}\phi_a(s).
\end{equation}
By the definition of $q$,
$S\cap R\subseteq R\cap[-q,q]^m$.  Thus
\eqref{eq:short-annihilation} implies $\phi_a(t)=1$ for every
$a\in S\cap R$, and \eqref{eq:average-polynomial} gives
\begin{equation}\label{eq:equal-averages}
 A(t)=\sum_{a\in S\cap R}c_a\phi_a(t)
     =\sum_{a\in S\cap R}c_a
     =\sum_{a\in S\cap R}c_a\phi_a(0)=A(0).
\end{equation}

Because $d$ vanishes on $H_R$ and $\mu_R$ is a probability measure,
\begin{equation}\label{eq:average-at-zero}
 |A(0)|
 =\left|\int_{H_R}\bigl(P(h)-d(h)\bigr)\,d\mu_R(h)\right|\leq\|P-d\|_\infty<\eta.
\end{equation}
Since $H_R$ is a subgroup and $\rho_\infty$ is translation invariant,
$d(t+h)=d(t)$ for every $h\in H_R$.  Consequently,
$d(t)=\int_{H_R}d(t+h)\,d\mu_R(h)$, and therefore
\begin{equation}\label{eq:average-at-t}
 |d(t)-A(t)|
 =\left|\int_{H_R}\bigl(d(t+h)-P(t+h)\bigr)\,d\mu_R(h)\right|
 <\eta.
\end{equation}
Combining \eqref{eq:equal-averages}--\eqref{eq:average-at-t}, we obtain
\[
 d(t)=|d(t)|
 \leq |d(t)-A(t)|+|A(0)|
 <2\eta<\varepsilon.
\]
\end{proof}

Now let $G$ be an Abelian group, let
$z=(z_i)_{i<m}\in G^m$ and
$t=(t_i)_{i<m}\in\T^m$ satisfy the hypothesis of the lemma, and take
$R=\Rel(z)$.
Then $t$ satisfies the hypothesis of the claim for this $R$.  Hence $\rho_\infty(t,H_R)<\varepsilon$, so there
exists $u\in H_R$ such that
$\rho_\infty(u,t)<\varepsilon$.  Define a map on the subgroup
$L=\langle z_i:i<m\rangle\leq G$ by
\[
 \varphi\left(\sum_{i<m}n_i z_i\right)=\sum_{i<m}n_i u_i.
\]
To see that this is well defined, suppose that $n,n'\in\Z^m$ satisfy
$\sum_{i<m}n_i z_i=\sum_{i<m}n'_i z_i$.  Then $n-n'\in\Rel(z)=R$, and since
$u\in H_R$,
\[
 \sum_{i<m}n_i u_i-\sum_{i<m}n'_i u_i
 =\sum_{i<m}(n_i-n'_i)u_i=0.
\]
Clearly, $\varphi$ is a homomorphism from $L$ to $\T$.
By injectivity of $\T$, the map $\varphi$ extends to some
$\psi\in\Hom(G,\T)$.  For each $i<m$ we have $\psi(z_i)=u_i$, and therefore
\[
 \max_{i<m}\|\psi(z_i)-t_i\|=\rho_\infty(u,t)<\varepsilon.
\]
\end{proof}

%% file: sections/04-bounded-independence-and-deletion.tex
\section{Bounded integer relations and deletion}

The goal of this section is Lemma~\ref{lem:fusion-step}.  Given a
homomorphism $\psi:G\to\T$ and finite sets $A,X\subseteq G$, that lemma
produces a large subset $Y\subseteq X$ and a homomorphism $\psi':G\to\T$
which approximately preserves the values of $\psi$ on $A$ and is small on
$Y$.  Lemma~\ref{lem:uniform-kronecker} reduces this construction to
eliminating bounded-height integer relations that mix elements of $A$ with
elements of $Y$.  We now develop the finite combinatorial deletion argument
needed to achieve this.

\begin{definition}
Let $G$ be an Abelian group.  For a finite set $X\subseteq G$, put
\[
 \Rel_G(X)=
 \left\{c=(c_x)_{x\in X}\in\Z^X:
 \sum_{x\in X}c_x\,x=0\right\}.
\]
This is the set-indexed version of the notation $\Rel(z)$ introduced in
Section~\ref{sec:preliminaries}.
For $c\in\Z^X$, put
\[
 \htop(c)=\max\bigl(\{0\}\cup\{|c_x|:x\in X\}\bigr).
\]

For $M\in\N$, the set $X$ is \emph{$M$-independent} if the zero vector is
the only $c\in\Rel_G(X)$ satisfying $\htop(c)\leq M$.

For finite sets $A,Y\subseteq G$, put
\[
 \Rel_G(A,Y)=
 \left\{(b,c)\in\Z^A\times\Z^Y:
 \sum_{a\in A}b_a\,a+\sum_{y\in Y}c_y\,y=0\right\}.
\]
An element $(b,c)\in\Rel_G(A,Y)$ is \emph{mixed} if $b\neq0$ and
$c\neq0$.  We set $\htop(b,c)=\max\{\htop(b),\htop(c)\}$.
\end{definition}

\begin{lemma}\label{lem:finite-extraction}
Let $G$ be a torsion-free Abelian group and $S\subseteq G$ be infinite.
Given $M,N\in\N$,
there exists an $M$-independent subset of $S$ of cardinality $N$.
\end{lemma}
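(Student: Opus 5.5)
The plan is a greedy induction on $N$: we add one element of $S$ at a time while preserving $M$-independence. The only tool needed is uniqueness of division in a torsion-free group. For each nonzero $n\in\Z$, the map $x\mapsto nx$ is injective on $G$, since $nx=nx'$ gives $n(x-x')=0$ and hence $x=x'$. So for each $g\in G$ the equation $nx=g$ has at most one solution.

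\emph{Base case.} For $N=0$, the empty set is $M$-independent, because $\Z^\emptyset$ contains only the zero vector.

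\emph{Inductive step.} Suppose $F\subseteq S$ is $M$-independent with $|F|=k$. We want $x\in S\setminus F$ such that $F\cup\{x\}$ is still $M$-independent. Call $x\in S\setminus F$ \emph{bad} if there is a nonzero $c\in\Rel_G(F\cup\{x\})$ with $\htop(c)\leq M$.

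\emph{Every bad $x$ satisfies a short equation.} Take such a $c$.
\begin{itemize}
\item If $c_x=0$, then $c$ restricts to a nonzero relation on $F$ of height at most $M$. This contradicts the $M$-independence of $F$.
\item Hence $0<|c_x|\leq M$, and
\[
 c_x\,x=-\sum_{y\in F}c_y\,y ,
\]
where each $c_y\in[-M,M]$.
\end{itemize}

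\emph{Counting bad elements.} The right-hand side takes at most $(2M+1)^k$ values, and $c_x$ ranges over at most $2M$ nonzero integers. By uniqueness of division, each pair consisting of a value of $c_x$ and a value of the right-hand side determines at most one $x$. So there are at most $2M(2M+1)^k$ bad elements.

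This count also covers $x=0$: taking $c_x=1$ and all other coefficients $0$ shows $0$ is bad, so $0$ is never chosen.

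\emph{Choosing the new element.} Since $S\setminus F$ is infinite and only finitely many of its elements are bad, we can pick a good $x\in S\setminus F$. Then $F\cup\{x\}$ is an $M$-independent subset of $S$ of cardinality $k+1$. After $N$ steps the induction gives the required set of cardinality $N$.

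There is no real obstacle. The one point needing care is that torsion-freeness is used exactly to make bad elements finite, via uniqueness of solutions to $nx=g$. The argument also does not depend on relations being counted only with the coefficient of $x$ nonzero: any relation with $c_x=0$ is already excluded by the inductive hypothesis.
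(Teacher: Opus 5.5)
Your proposal is correct and is essentially the paper's proof. Both use a greedy induction in which a bad candidate $x$ must carry a nonzero coefficient (otherwise $F$ would not be $M$-independent), so $x$ solves one of finitely many bounded equations $c_x x=-\sum_{y\in F}c_y y$, each of which has at most one solution by torsion-freeness. The only blemish is your aside that $0$ is always bad: this needs $M\geq 1$, since for $M=0$ every finite set is $M$-independent, but the argument never uses it.
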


\begin{proof}
We construct, by induction on $k\leq N$, an $M$-independent set
$B_k\subseteq S$ such that $|B_k|=k$.

For $k=0$, take $B_0=\varnothing$, which is trivially
$M$-independent.  Suppose that $k<N$ and that
\[
B_k=\{x_i:i<k\}\subseteq S
\]
has already been chosen and is $M$-independent. We shall choose
$x\in S\setminus B_k$ such that $B_k\cup\{x\}$ remains
$M$-independent.

Indeed, given $x\in S\setminus B_k$, if $B_k\cup\{x\}$ is not
$M$-independent, then there exist integers $q$ and $(c_i)_{i<k}$, not all
zero, with
\[
|q|\leq M,\qquad |c_i|\leq M\quad(i<k),
\]
such that
\[
qx+\sum_{i<k} c_i x_i=0.
\]
We claim that $q\neq0$. Indeed, if $q=0$, then the family
$c\in\Z^{B_k}$ defined by $c_{x_i}=c_i$ for $i<k$ would be a nonzero
element of $\Rel_G(B_k)$ with $\htop(c)\leq M$, contradicting the
$M$-independence of $B_k$. Hence every bad
choice of $x$ satisfies an equation of the form
\[
qx=-\sum_{i<k} c_i x_i,
\qquad
0<|q|\leq M,\quad |c_i|\leq M\quad(i<k).
\]

There exist only finitely many possible choices of $q$ and $(c_i)_{i<k}$,
and each such choice determines at most one possible value of $x$.
In fact, if both $x$ and $x'$ satisfy the same equation, then
\[
q(x-x')=0.
\]
Since $G$ is torsion-free and $q\neq0$, it follows that $x=x'$.
Consequently, only finitely many elements of $S$ are forbidden.

Because $S$ is infinite and $B_k$ is finite, we may choose
$x_k\in S\setminus B_k$ outside this finite set of forbidden elements.
Then $B_{k+1}=B_k\cup\{x_k\}$ is $M$-independent and has cardinality
$k+1$.  This completes the induction step and the proof.

\end{proof}

\begin{lemma}\label{lem:integer-dependence}
For $R,Q\in\N$ there is $B=B(R,Q)\in\N$ such that, whenever
$s\in\N$, $s\leq R$, and
$\mathbf v=(v_0,\ldots,v_s)\in(\Z^s)^{s+1}$ has all coordinates of
absolute value at most $Q$, there exists
$\lambda\in\Rel(\mathbf v)\setminus\{0\}$ such that
$\htop(\lambda)\leq B$.
\end{lemma}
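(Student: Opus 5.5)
The plan is to note first that, for fixed $R$ and $Q$, only finitely many tuples $\mathbf v$ occur, and then take $B$ to be a maximum over them. For each $s\le R$, the set of $(s+1)$-tuples $\mathbf v=(v_0,\ldots,v_s)$ of vectors in $\Z^s$ with all coordinates in $[-Q,Q]$ is finite; it has at most $(2Q+1)^{s(s+1)}$ elements. So the whole statement reduces to showing that each single such $\mathbf v$ admits some nonzero integer relation. Granting that, for each $\mathbf v$ we fix one nonzero $\lambda_{\mathbf v}\in\Rel(\mathbf v)$ and set
\[
 B(R,Q)=\max\bigl(\{0\}\cup\{\htop(\lambda_{\mathbf v}):s\le R,\ \mathbf v\text{ as above}\}\bigr),
\]
which is a maximum over a finite set and therefore a natural number.

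For the existence of a relation for a single $\mathbf v$: there are $s+1$ vectors in $\Q^s$, so they are linearly dependent over $\Q$. Hence some nonzero $\mu\in\Q^{s+1}$ satisfies $\sum_{i\le s}\mu_i v_i=0$. Multiplying by a common denominator of the $\mu_i$ gives a nonzero $\lambda\in\Z^{s+1}$ with the same property, i.e.\ $\lambda\in\Rel(\mathbf v)\setminus\{0\}$. The case $s=0$ is degenerate: then $v_0$ lies in $\Z^0=\{0\}$, and $\lambda=(1)$ works.

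No step here is genuinely hard. The only point needing care is the finiteness of the set of $\mathbf v$'s, which makes $B$ a maximum rather than a supremum. If one prefers an explicit bound, Cramer's rule or Siegel's lemma yields one (for instance $B\le (sQ)^s$ via minors), but this is unnecessary for the statement.
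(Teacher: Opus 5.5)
Your proposal is correct and is essentially the paper's proof: $s+1$ vectors in $\Q^s$ are linearly dependent over $\Q$, clearing denominators gives a nonzero integer relation, and $B$ is taken as a maximum over the finitely many admissible tuples. The only difference is cosmetic: the paper uses the minimal height $b(\mathbf v)$ for each tuple, while you fix an arbitrary witness $\lambda_{\mathbf v}$, and this does not affect the argument.
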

\begin{proof}
Fix $s\leq R$ and let $\mathbf v=(v_0,\ldots,v_s)$ be a tuple satisfying
the hypotheses.  Consider the linear map
$T_{\mathbf v}:\Q^{s+1}\longrightarrow\Q^s$
defined by
\[
T_{\mathbf v}(c_0,\ldots,c_s)
   =\sum_{i\le s}c_i v_i.
\]
As $T_{\mathbf v}$ cannot be injective, there
is a nonzero tuple $(c_0,\ldots,c_s)\in\Q^{s+1}$
such that
\[
\sum_{i\le s}c_i v_i=0.
\]
Choose a positive integer $d$ which is a common multiple of the
denominators of $c_0,\ldots,c_s$, and put
\[
\lambda_i=dc_i
\qquad(i\le s).
\]
Then each $\lambda_i$ is an integer, the tuple
$(\lambda_0,\ldots,\lambda_s)$ is nonzero, and
\[
\sum_{i\le s}\lambda_i v_i
   =d\sum_{i\le s}c_i v_i
   =0.
\]
Thus $\Rel(\mathbf v)\setminus\{0\}$ is nonempty for every tuple
$\mathbf v$ satisfying the hypotheses.  Let
\[
b(\mathbf v)=
\min\left\{
 \htop(\lambda):
 \lambda\in\Rel(\mathbf v)\setminus\{0\}
\right\}
\]
and
\[
\mathcal V_s=
\left\{
 (v_0,\ldots,v_s)\in(\Z^s)^{s+1}:
 |v_i(j)|\le Q
 \text{ for every }i\le s\text{ and }j<s
\right\}.
\]
Both $\mathcal V_s$ and the set
\[
\mathcal V=
\bigcup_{s\le R}\bigl(\{s\}\times\mathcal V_s\bigr)
\]
are finite.  We may therefore define
\[
B(R,Q)=
\max\left(
 \{1\}\cup
 \left\{
 b(\mathbf v):
 s\le R\text{ and }\mathbf v\in\mathcal V_s
 \right\}
\right),
\]
which is a well-defined positive integer depending only on $R$ and
$Q$.
By the definition of
$b(\mathbf v)$, there is a nonzero
$\lambda=(\lambda_0,\ldots,\lambda_s)\in
\Rel(\mathbf v)$ such that
\[
\htop(\lambda)=b(\mathbf v)\le B(R,Q).
\]
This is the required tuple.
\end{proof}

\begin{lemma}[Bounded deletion]\label{lem:bounded-deletion}
For every $R,Q\in\N$ there exists $M=M(R,Q)\in\N$ with the following
property.  If $G$ is a torsion-free Abelian group, $A,X\subseteq G$ are
finite, $|A|\leq R$, and $X$ is
$M$-independent, then some $Y\subseteq X$ satisfies
\begin{enumerate}
\item $|X\setminus Y|\leq|A|$;
\item there is no mixed $(b,c)\in\Rel_G(A,Y)$ such that
$\htop(b,c)\leq Q$.
\end{enumerate}
\end{lemma}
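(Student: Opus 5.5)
The plan is a greedy deletion argument in which a linear-dependence count among the $A$-parts of the witnessing relations bounds the number of deletions. Given $R,Q$, let $B=B(R,Q)$ be supplied by Lemma~\ref{lem:integer-dependence} and put
\[
 M=M(R,Q)=(R+1)\,B\,Q.
\]
Fix $G$, $A$, and an $M$-independent $X$, with $|A|=s\leq R$. We delete elements of $X$ one at a time. Put $Y_0=X$. At stage $k$, if $\Rel_G(A,Y_k)$ has no mixed element of height $\leq Q$, stop and take $Y=Y_k$. Otherwise choose a mixed $(b^{(k)},c^{(k)})\in\Rel_G(A,Y_k)$ with $\htop(b^{(k)},c^{(k)})\leq Q$. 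Since $c^{(k)}\neq0$, choose $y_k\in Y_k$ with $c^{(k)}_{y_k}\neq0$, and set $Y_{k+1}=Y_k\setminus\{y_k\}$. We regard each $c^{(k)}$ as an element of $\Z^X$ by extending it by zero outside $Y_k$. The process terminates because $X$ is finite. Conclusion (ii) then holds by the stopping rule, so it remains to prove that at most $s$ deletions occur, which gives (i).

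The key observation is a triangularity property: $c^{(j)}$ is supported in $Y_j$, and $y_i\notin Y_j$ for $j>i$, so $c^{(j)}_{y_i}=0$ whenever $j>i$, while $c^{(i)}_{y_i}\neq0$. Suppose toward a contradiction that stages $0,1,\ldots,s$ all occur. The vectors $b^{(0)},\ldots,b^{(s)}$ are $s+1$ vectors in $\Z^A\cong\Z^s$ with entries bounded by $Q$. Lemma~\ref{lem:integer-dependence} therefore gives a nonzero $\lambda\in\Z^{s+1}$ with $\htop(\lambda)\leq B$ and $\sum_{i\leq s}\lambda_i b^{(i)}=0$. (If $s=0$, then there are no mixed relations at all, since $b\neq0$ is impossible, so nothing is deleted.)

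Now set $c=\sum_{i\leq s}\lambda_i c^{(i)}\in\Z^X$. Adding the $s+1$ relations with weights $\lambda_i$ shows that $\sum_{x\in X}c_x\,x=-\sum_{a\in A}\bigl(\sum_i\lambda_ib^{(i)}_a\bigr)a=0$, so $c\in\Rel_G(X)$. Its height satisfies $\htop(c)\leq(s+1)BQ\leq M$. Let $i_0$ be the least index with $\lambda_{i_0}\neq0$. By triangularity, $c_{y_{i_0}}=\lambda_{i_0}c^{(i_0)}_{y_{i_0}}\neq0$, because the terms with $i<i_0$ vanish since $\lambda_i=0$ and the terms with $i>i_0$ vanish at $y_{i_0}$. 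Thus $c$ is a nonzero relation of height $\leq M$, which contradicts the $M$-independence of $X$. Hence at most $s=|A|$ deletions happen, and $|X\setminus Y|\leq|A|$.

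The main point to get right is the nonvanishing of the combined relation. This is exactly why each deleted $y_k$ must be chosen from the support of the current $c^{(k)}$, and why later relations are taken inside the shrunken $Y_k$: these choices produce the triangular structure. The only other ingredient is uniform control of the height of $\lambda$, and that is what Lemma~\ref{lem:integer-dependence} provides independently of $G$.
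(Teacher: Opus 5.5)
Your proof is correct and is essentially the paper's argument: you use the same constant $M=(R+1)BQ$, apply Lemma~\ref{lem:integer-dependence} in the same way to the $A$-parts of $s+1$ mixed witnesses, and reach the same contradiction with $M$-independence. The only difference is bookkeeping. The paper takes a good $Y$ of maximum cardinality, so that each of $s+1$ points $x_i\notin Y$ occurs only in its own witness (a diagonal structure). Your greedy deletion instead produces a triangular structure, which you handle by choosing the least $i_0$ with $\lambda_{i_0}\neq0$.
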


\begin{proof}
Let $B=B(R,Q)$ be supplied by
Lemma~\ref{lem:integer-dependence}, and put
\[
M=(R+1)BQ.
\]
Choose a subset $Y\subseteq X$ of maximum cardinality which admits no
mixed $(b,c)\in\Rel_G(A,Y)$ with $\htop(b,c)\le Q$.

Write $s=|A|$ and suppose, towards a contradiction, that
$X\setminus Y$ contains distinct elements $x_0,\ldots,x_s$. By the
maximality of
$|Y|$, for each $i\leq s$ there is a mixed element
\[
 (\beta_i,\gamma_i)\in\Rel_G(A,Y\cup\{x_i\})
 \qquad\text{with}\qquad
 \htop(\beta_i,\gamma_i)\leq Q.
\]
Thus
\begin{equation}\label{eq:deletion-witness}
 \sum_{a\in A}\beta_i(a)\,a
 +
 \sum_{y\in Y}\gamma_i(y)\,y
 +
 \gamma_i(x_i)\,x_i
 =0.
\end{equation}
Moreover, $\gamma_i(x_i)\ne0$, since otherwise
$(\beta_i,\gamma_i|_Y)$ would be a mixed element of $\Rel_G(A,Y)$
contradicting the choice of $Y$.

Fix an arbitrary enumeration
\[
A=\{a_0,\ldots,a_{s-1}\}
\]
and for each $i\le s$, define
\[
b_i=(\beta_i(a_0),\ldots,\beta_i(a_{s-1}))\in\Z^s.
\] 

Apply Lemma~\ref{lem:integer-dependence} to
$b_0,\ldots,b_s\in\Z^s$.  Since $s\le R$, there are integers
$\lambda_0,\ldots,\lambda_s$, not all zero, such that
\[
|\lambda_i|\le B\quad(i\le s),
\qquad
\sum_{i\le s}\lambda_i b_i=0.
\]

Multiplying the $i$-th instance of \eqref{eq:deletion-witness} by $\lambda_i$
and summing over $i\leq s$, the sum involving the elements of $A$ vanishes,
and we obtain
\[
\sum_{y\in Y}
 \left(\sum_{i\le s}\lambda_i\gamma_i(y)\right)\,y
+
\sum_{i\le s}\lambda_i\gamma_i(x_i)\,x_i
=0.
\]
Put $W=Y\cup\{x_0,\ldots,x_s\}$ and define $e\in\Z^W$ by
\[
 e_y=\sum_{i\leq s}\lambda_i\gamma_i(y)\quad(y\in Y),
 \qquad
 e_{x_i}=\lambda_i\gamma_i(x_i)\quad(i\leq s).
\]
The displayed equality says exactly that $e\in\Rel_G(W)$.

For each $y\in Y$, we have
\[
|e_y|\leq\sum_{i\le s}|\lambda_i|\,|\gamma_i(y)|
\le(s+1)BQ\le M,
\]
and for each $i\leq s$,
\[
|e_{x_i}|=|\lambda_i\gamma_i(x_i)|\le BQ\le M.
\]
Moreover, $e\neq0$: if $\lambda_{i_0}\ne0$, then
$e_{x_{i_0}}=\lambda_{i_0}\gamma_{i_0}(x_{i_0})\ne0$.
Extending $e$ by zero on $X\setminus W$ therefore gives a nonzero element
of $\Rel_G(X)$ of height at most $M$.
This contradicts the $M$-independence of $X$.
Therefore
$|X\setminus Y|\le|A|$, as required.
\end{proof}

\begin{lemma}\label{lem:fusion-step}
Let $G$ be a torsion-free Abelian group, let $A,X\subseteq G$ be finite, and
let $\psi:G\to\T$ be a homomorphism. Fix $\varepsilon>0$ and
$R\in\N$ with $R\geq |A|$.

For each positive integer $m\leq R+|X|$, let $q(m,\varepsilon)$ be an
integer supplied by Lemma~\ref{lem:uniform-kronecker}, and let $Q\in\N$
satisfy
\[
Q\geq
\max\Bigl(
 \{1\}\cup
 \{q(m,\varepsilon):1\leq m\leq R+|X|\}
\Bigr).
\]

Let $M=M(R,Q)$ be supplied by Lemma~\ref{lem:bounded-deletion}. If $X$ is
$M$-independent, then there exist a set $Y\subseteq X$ and a
homomorphism $\psi':G\to\T$ such that
\begin{enumerate}
\item\label{item:fusion-delete}
$|X\setminus Y|\leq |A|$;

\item\label{item:fusion-no-mixed}
there is no mixed $(b,c)\in\Rel_G(A,Y)$ such that $\htop(b,c)\leq Q$;

\item\label{item:fusion-old}
$\|\psi'(a)-\psi(a)\|<\varepsilon$ for every $a\in A$;

\item\label{item:fusion-new}
$\|\psi'(y)\|<\varepsilon$ for every $y\in Y$.
\end{enumerate}
\end{lemma}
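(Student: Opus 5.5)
First apply Lemma~\ref{lem:bounded-deletion} with the given $R$ and $Q$. Since $|A|\le R$ and $X$ is $M(R,Q)$-independent, this gives $Y\subseteq X$ satisfying \ref{item:fusion-delete} and \ref{item:fusion-no-mixed} directly. It remains to build $\psi'$. We will do this with Lemma~\ref{lem:uniform-kronecker}, applied to a tuple listing $A$ followed by $Y$, with targets $\psi(a)$ on $A$ and $0$ on $Y$.

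Concretely, enumerate $A=\{a_0,\dots,a_{s-1}\}$ and $Y=\{y_0,\dots,y_{k-1}\}$, and set $m=s+k$. The sets $A$ and $Y$ might overlap; to avoid this issue, first check whether $A\cap Y$ can be nonempty. If $x\in A\cap Y$, then $(b,c)$ with $b_x=1$ and $c_x=-1$ is a mixed relation of height $1\le Q$, which is excluded by \ref{item:fusion-no-mixed}. Hence $A\cap Y=\varnothing$, and we may take $z=(a_0,\dots,a_{s-1},y_0,\dots,y_{k-1})$ and $t=(\psi(a_0),\dots,\psi(a_{s-1}),0,\dots,0)$. If $m=0$ the statement is trivial with $\psi'=\psi$. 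Otherwise $1\le m\le R+|X|$, so $q(m,\varepsilon)\le Q$.

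We must verify the hypothesis of Lemma~\ref{lem:uniform-kronecker}. Let $(b,c)\in\Rel(z)$ have height at most $q(m,\varepsilon)\le Q$; as a pair, it is an element of $\Rel_G(A,Y)$. By \ref{item:fusion-no-mixed} it is not mixed, so there are two cases.

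If $c=0$, then $\sum b_a a=0$. Since $\psi$ is a homomorphism, $\sum b_a\psi(a)=0$, which is exactly $\sum a_it_i=0$ for this relation.

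If $b=0$, the target sum is $\sum c_y\cdot 0=0$ automatically.

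Thus every relation of height at most $q(m,\varepsilon)$ annihilates $t$. Lemma~\ref{lem:uniform-kronecker} then yields $\psi'\in\Hom(G,\T)$ with $\|\psi'(a)-\psi(a)\|<\varepsilon$ for $a\in A$ and $\|\psi'(y)\|<\varepsilon$ for $y\in Y$, which gives \ref{item:fusion-old} and \ref{item:fusion-new}.

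The only delicate point is matching the height bound for relations of the tuple with the height bound in \ref{item:fusion-no-mixed}. This holds because $Q$ dominates $q(m,\varepsilon)$ for every admissible $m$, and $m=|A|+|Y|\le R+|X|$. The disjointness of $A$ and $Y$ also has to be checked so that the indexing matches $\Rel_G(A,Y)$. I expect no substantial obstacle: the combinatorial work is already done in Lemma~\ref{lem:bounded-deletion}.
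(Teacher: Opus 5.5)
Your proposal is correct and follows the paper's proof essentially step for step. You obtain $Y$ from Lemma~\ref{lem:bounded-deletion}, rule out $A\cap Y\neq\varnothing$ via the height-one mixed relation, and then verify the hypothesis of Lemma~\ref{lem:uniform-kronecker} (targets $\psi$ on $A$ and $0$ on $Y$) by splitting into the two non-mixed cases, using $m=|A|+|Y|\le R+|X|$ so that $q(m,\varepsilon)\le Q$.
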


\begin{proof}
Suppose that $X$ is $M$-independent. By
Lemma~\ref{lem:bounded-deletion}, there exists a set $Y\subseteq X$
satisfying items~\ref{item:fusion-delete}
and~\ref{item:fusion-no-mixed}.  Fix such a set $Y$.

We first observe that $A\cap Y=\varnothing$. Indeed, if $z\in A\cap Y$, the
coefficient families $b\in\Z^A$ and $c\in\Z^Y$ defined by
\[
 b_z=1,\quad c_z=-1,
 \qquad b_a=0\ (a\in A\setminus\{z\}),
 \qquad c_y=0\ (y\in Y\setminus\{z\})
\]
would form a mixed element $(b,c)\in\Rel_G(A,Y)$ with
$\htop(b,c)=1\leq Q$, contradicting
item~\ref{item:fusion-no-mixed}.

Since $A$ and $Y$ are disjoint, we may define a function
$t:A\cup Y\to\T$ by
\[
t(z)=
\begin{cases}
\psi(z),&\text{if }z\in A,\\
0,&\text{if }z\in Y.
\end{cases}
\]

If $A\cup Y=\varnothing$, take $\psi'=\psi$.  Items~\ref{item:fusion-old}
and~\ref{item:fusion-new} are then vacuously
satisfied.  Suppose now that $A\cup Y\neq\varnothing$, and let
$z=(z_i)_{i<m}$ enumerate this set without repetitions, so that
\[
A\cup Y=\{z_i:i<m\},
\]
for some $m\in\N$. Then, we have
\[
m=|A|+|Y|\leq R+|X|,
\]
and, by the choice of $Q$, $q(m,\varepsilon)\leq Q$.

Let $\mathbf d=(d_i)_{i<m}\in\Rel(z)$ satisfy
$\htop(\mathbf d)\leq q(m,\varepsilon)$. We will prove that
$\sum_{i<m}d_i t(z_i)=0$.  Since $\mathbf d\in\Rel(z)$, we have
\[
\sum_{\substack{i<m\\z_i\in A}}d_i z_i
+
\sum_{\substack{i<m\\z_i\in Y}}d_i z_i
=0,
\]
and, because the enumeration has no repetitions and $A\cap Y=\varnothing$,
it determines $(b,c)\in\Rel_G(A,Y)$ by
\[
 b_{z_i}=d_i\quad(z_i\in A),
 \qquad
 c_{z_i}=d_i\quad(z_i\in Y).
\]
$\htop(b,c)\leq q(m,\varepsilon)\leq Q$.  By
item~\ref{item:fusion-no-mixed}, the pair
$(b,c)$ is not mixed, so one of the following two alternatives holds:
\[
d_i=0\quad\text{for every $i<m$ such that $z_i\in A$},
\]
or
\[
d_i=0\quad\text{for every $i<m$ such that $z_i\in Y$}.
\]

In the first case, all possibly nonzero coefficients correspond to
elements of $Y$. Since $t(y)=0$ for every $y\in Y$, we obtain
\[
\sum_{i<m}d_i t(z_i)=0.
\]

In the second case, all possibly nonzero coefficients correspond to
elements of $A$. Since $\psi$ is a homomorphism, we obtain
\[
\begin{aligned}
\sum_{i<m}d_i t(z_i)
&=
\sum_{\substack{i<m\\z_i\in A}}d_i\psi(z_i)\\
&=
\psi\left(
\sum_{\substack{i<m\\z_i\in A}}d_i z_i
\right)\\
&=\psi(0)=0.
\end{aligned}
\]

Thus, in either case,
\[
\sum_{i<m}d_i t(z_i)=0.
\]

Lemma~\ref{lem:uniform-kronecker} now provides a homomorphism
$\psi':G\to\T$ such that
\[
\|\psi'(z_i)-t(z_i)\|<\varepsilon
\qquad(i<m).
\]
For every $a\in A$, we have $t(a)=\psi(a)$, and hence
\[
\|\psi'(a)-\psi(a)\|<\varepsilon.
\]
For every $y\in Y$, we have $t(y)=0$, and hence $\|\psi'(y)\|<\varepsilon$.
Thus items~\ref{item:fusion-old} and~\ref{item:fusion-new} also hold.
\end{proof}

%% file: sections/05-triangular-coding-and-block-ultrafilters.tex
\section{Block ultrafilters}\label{sec:block-ultrafilters}

Fix a torsion-free Abelian group $G$ of cardinality $\cfrak$ and, using
Lemma~\ref{lem:torsion-free-coordinatization}, identify it so that
\[
 \Z^{(\cfrak)}\leq G\leq\Q^{(\cfrak)}.
\]
There are exactly $\cfrak$ injective sequences in $G$.  Enumerate them without repetitions as
$(s_\alpha)_{\alpha<\cfrak}$.

For each $\alpha<\cfrak$, the union of the supports of the terms of
$s_\alpha$ is countable and hence bounded in $\cfrak$, because
$\operatorname{cf}(\cfrak)>\omega$ by K\"onig's theorem.  Recursively choose
distinct coordinates $\iota(\alpha)<\cfrak$ so that $\iota$ is an injection
and
\[
 \bigcup_{n\in\N}\supp s_\alpha(n)
 \subseteq\iota(\alpha)
 \qquad(\alpha<\cfrak).
\]

We also fix the numerical parameters and the finite blocks used in the
construction as follows.

\begin{itemize}[
    leftmargin=*,
    itemsep=1.25em,
    topsep=0.75em
]

\item 
The sequence $(N_l)_{l\in\N}$ is defined recursively by
\[
N_l
=
(l+2)
\left(
2l+2+\sum_{j<l}N_j
\right)
\qquad(l\in\N),
\]
where the empty sum is understood to be equal to $0$.

\item 
For every $l\in\N$, put
\begin{equation}\label{eq:local-stage-properties2}
S_l=\sum_{j<l}N_j,
\quad
R_l=S_l+2l+2\quad \text{and} \quad \varepsilon_l=2^{-(l+6)}.
\end{equation}

\item 
For every $l\in\N$ and every positive integer
$m\leq R_l+N_l$, choose an integer $q(m,\varepsilon_l)$ supplied by
Lemma~\ref{lem:uniform-kronecker}. Then put
\[
Q_l
=
\max\Bigl(
\{1\}\cup
\{q(m,\varepsilon_l):1\leq m\leq R_l+N_l\}
\Bigr).
\]

\item 
For every $l\in\N$, let
\[
M_l=M(R_l,Q_l)
\]
be the integer supplied by Lemma~\ref{lem:bounded-deletion} for the
parameters $R_l$ and $Q_l$.

\item 
Finally, define
\[
I_l
=
\{S_l,S_l+1,\ldots,S_l+N_l-1\}
\qquad(l\in\N).
\]

\end{itemize}

For every $l\in\N$, the definitions give
\[
 N_l=(l+2)R_l,
 \qquad
 \frac{R_l}{N_l}=\frac1{l+2} \quad\text{and}\quad \sum_{l\in\N}\varepsilon_l=\frac1{32}.
\]
The intervals $(I_l)_{l\in\N}$ are pairwise disjoint and partition
$\N$.

\begin{lemma}\label{lem:independent-block-subsequences}
There exist strictly increasing maps $\varphi_\alpha:\N\to\N$ and sequences
\[
h_\alpha=s_\alpha\circ\varphi_\alpha:
\N\longrightarrow G
\qquad(\alpha<\cfrak)
\]
such that, for every $\alpha<\cfrak$, the following conditions hold:
\begin{itemize}
\item for every $l\in\N$, the set
\[
X_{\alpha,l}
=
\{h_\alpha(n)-e_{\iota(\alpha)}:n\in I_l\}
\]
has cardinality $N_l$ and is $M_l$-independent;

\item for each $n\in\N$,
\begin{equation}\label{eq:prepared-support}
 \supp h_\alpha(n)\subseteq\iota(\alpha).
\end{equation}
\end{itemize}
\end{lemma}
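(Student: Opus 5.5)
Fix $\alpha<\cfrak$; the construction is carried out separately for each $\alpha$, so no interaction between different indices has to be controlled. The plan is to choose $\varphi_\alpha$ block by block, by recursion on $l$, applying Lemma~\ref{lem:finite-extraction} to a suitable infinite subset of $G$ at each stage.

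Put $S=\{s_\alpha(n)-e_{\iota(\alpha)}:n\in\N\}$. Since $s_\alpha$ is injective, the map $n\mapsto s_\alpha(n)-e_{\iota(\alpha)}$ is injective, so $S$ is infinite. The support condition \eqref{eq:prepared-support} is automatic. Every term $s_\alpha(\varphi_\alpha(n))$ has support inside $\bigcup_k\supp s_\alpha(k)\subseteq\iota(\alpha)$ by the choice of $\iota$. This holds for any strictly increasing $\varphi_\alpha$, so only the block condition needs work.

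Suppose $\varphi_\alpha$ has been defined on $\bigcup_{j<l}I_j=\{0,\dots,S_l-1\}$, with values below some integer $K_l$. Apply Lemma~\ref{lem:finite-extraction} to the infinite set
\[
 S^{(l)}=\{s_\alpha(k)-e_{\iota(\alpha)}:k\geq K_l\}\subseteq G,
\]
which is infinite and lies in the torsion-free group $G$. This gives an $M_l$-independent subset of $S^{(l)}$ of cardinality $N_l$. Its elements are $s_\alpha(k)-e_{\iota(\alpha)}$ for $N_l$ distinct indices $k\geq K_l$, distinct because $s_\alpha$ is injective. List these indices increasingly as $k_0<\dots<k_{N_l-1}$, set $\varphi_\alpha(S_l+i)=k_i$, and let $K_{l+1}=k_{N_l-1}+1$.

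The resulting $\varphi_\alpha$ is strictly increasing. Within a block this holds by the ordering of the $k_i$. Across blocks it holds because the indices used for block $l$ are $\geq K_l$, which exceeds every earlier value. With $h_\alpha=s_\alpha\circ\varphi_\alpha$, the set $X_{\alpha,l}$ is exactly the extracted set, so it has cardinality $N_l$ and is $M_l$-independent.

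There is no serious obstacle here. The only point needing care is that subtracting $e_{\iota(\alpha)}$ keeps the terms distinct and keeps $S^{(l)}$ infinite, which follows from the injectivity of $s_\alpha$. The actual number-theoretic content is already contained in Lemma~\ref{lem:finite-extraction}, which uses torsion-freeness.
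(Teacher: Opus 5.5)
Your proposal is correct and matches the paper's proof step for step. The paper also recursively applies Lemma~\ref{lem:finite-extraction} to the tail $\{s_\alpha(k)-e_{\iota(\alpha)}:k\geq r_{\alpha,l}\}$, pulls the extracted set back to $N_l$ indices via the injective map $k\mapsto s_\alpha(k)-e_{\iota(\alpha)}$, enumerates them increasingly to define $\varphi_\alpha$ on $I_l$, and gets the support condition from the choice of $\iota$; the only difference is that your $K_l$ is the paper's $r_{\alpha,l}$.
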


\begin{proof}
Fix $\alpha<\cfrak$, and define $\delta_\alpha:\N\to G$
by
\[
\delta_\alpha(k)
=
s_\alpha(k)-e_{\iota(\alpha)}
\qquad(k\in\N).
\]
The map $\delta_\alpha$ is injective.

\begin{claim}
There exists a sequence $(K_{\alpha,l}:l\in\N)$ of finite subsets of $\N$
satisfying the following conditions:
\begin{enumerate}[label=(\alph*)]
\item $|K_{\alpha,l}|=N_l$;

\item $\max K_{\alpha,j}<\min K_{\alpha,l}$
whenever $j<l$;

\item $\delta_\alpha[K_{\alpha,l}]$ is $M_l$-independent.
\end{enumerate}
\end{claim}
\begin{proof}[Proof of the claim]
Suppose that $K_{\alpha,j}$ has been chosen for every $j<l$. Put
\[
r_{\alpha,l}
=
\begin{cases}
0,&l=0,\\[2mm]
1+\max\displaystyle\bigcup_{j<l}K_{\alpha,j},&l>0.
\end{cases}
\]
Since the set
\[
T_{\alpha,l}
=
\{\delta_\alpha(k):k\geq r_{\alpha,l}\}
\]
is infinite and $G$ is torsion-free, Lemma~\ref{lem:finite-extraction}
provides an $M_l$-independent set
\[
Z_{\alpha,l}\subseteq T_{\alpha,l}
\]
of cardinality $N_l$.

Let
\[
K_{\alpha,l}
=
\delta_\alpha^{-1}[Z_{\alpha,l}].
\]
As $\delta_\alpha$ is injective and
$Z_{\alpha,l}\subseteq T_{\alpha,l}$, we have
$|K_{\alpha,l}|=|Z_{\alpha,l}|=N_l$,
$K_{\alpha,l}\subseteq\N\setminus r_{\alpha,l}$, and
$\delta_\alpha[K_{\alpha,l}]=Z_{\alpha,l}$.
Therefore the three recursive requirements are satisfied.
\end{proof}
For every $l\in\N$, enumerate $K_{\alpha,l}$ increasingly as
\[
K_{\alpha,l}
=
\{k_{\alpha,l,0}
  <k_{\alpha,l,1}
  <\cdots
  <k_{\alpha,l,N_l-1}\}.
\]
We now define an auxiliary map $\varphi_\alpha:\N\to\N$
by
\[
\varphi_\alpha(S_l+j)
=
k_{\alpha,l,j}
\qquad(l\in\N,\ j<N_l).
\]
Since the intervals $I_l$
partition $\N$, this defines $\varphi_\alpha$ on all of $\N$.  The map
$\varphi_\alpha$ is strictly increasing.
Define
\[
h_\alpha=s_\alpha\circ \varphi_\alpha:\N\to G.
\]
For every $l\in\N$, we have
\[
\begin{aligned}
X_{\alpha,l}
&=
\{h_\alpha(n)-e_{\iota(\alpha)}:n\in I_l\}\\
&=
\{s_\alpha(k_{\alpha,l,j})-e_{\iota(\alpha)}:j<N_l\}\\
&=
\delta_\alpha[K_{\alpha,l}]
=
Z_{\alpha,l}.
\end{aligned}
\]
Consequently, $X_{\alpha,l}$ has cardinality $N_l$ and is
$M_l$-independent.

Finally, since $h_\alpha$ is a subsequence of $s_\alpha$,
\[
\supp h_\alpha(n)\subseteq\iota(\alpha)
\qquad(n\in\N).
\]
\end{proof}

Fix sequences $(h_\alpha)_{\alpha<\cfrak}$ as in
Lemma~\ref{lem:independent-block-subsequences} and a pairwise almost disjoint
family
\[
 (A_\alpha)_{\alpha<\cfrak}\subseteq[\N]^\omega;
\]
thus every $A_\alpha$ is infinite and
$|A_\alpha\cap A_\beta|<\infty$ whenever $\alpha\neq\beta$.  It is well known that such a family
exists in ZFC.  For every
$\alpha<\cfrak$, define
\[
\mathscr D_\alpha
=
\left\{
C\subseteq\N:
\lim_{\substack{l\in A_\alpha\\l\to\infty}}
\frac{|I_l\setminus C|}{N_l}=0
\right\}.
\]

For every $\alpha<\cfrak$, the family $\mathscr D_\alpha$ is a free filter on
$\N$.  Indeed, it contains $\N$, does not contain the empty set, and is upward
closed.  If $C,C'\in\mathscr D_\alpha$, then
\[
 |I_l\setminus(C\cap C')|
 \leq |I_l\setminus C|+|I_l\setminus C'|,
\]
so $C\cap C'\in\mathscr D_\alpha$.  Finally, if $F\subseteq\N$ is finite,
then $I_l\cap F=\varnothing$ for all sufficiently large $l$, and hence
$\N\setminus F\in\mathscr D_\alpha$.

\begin{lemma}\label{lem:block-filters}
Let $\alpha<\cfrak$.  Suppose that $B\subseteq A_\alpha$,
$A_\alpha\setminus B$ is finite, and
\[
E_l\subseteq I_l,
\qquad
|E_l|\leq R_l
\qquad(l\in B).
\]
Then $\displaystyle \bigcup_{l\in B}(I_l\setminus E_l)
\in\mathscr D_\alpha$.
\end{lemma}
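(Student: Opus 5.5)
Put $C=\bigcup_{l\in B}(I_l\setminus E_l)$. By the definition of $\mathscr D_\alpha$, it suffices to show that
\[
 \lim_{\substack{l\in A_\alpha\\l\to\infty}}\frac{|I_l\setminus C|}{N_l}=0 .
\]
The plan is to compute $I_l\cap C$ exactly for all sufficiently large $l\in A_\alpha$, and then use the numerical identity $R_l/N_l=1/(l+2)$ recorded after the definition of the parameters.

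First, since $A_\alpha\setminus B$ is finite, there is $l_0$ such that every $l\in A_\alpha$ with $l\geq l_0$ belongs to $B$. Fix such an $l$.

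The intervals $(I_j)_{j\in\N}$ are pairwise disjoint, and each set $I_j\setminus E_j$ is contained in $I_j$. Hence $I_l\cap(I_j\setminus E_j)=\varnothing$ for every $j\neq l$. It follows that
\[
 I_l\cap C=I_l\setminus E_l,
 \qquad\text{and therefore}\qquad
 I_l\setminus C=E_l .
\]

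Consequently, for every $l\in A_\alpha$ with $l\geq l_0$,
\[
 \frac{|I_l\setminus C|}{N_l}=\frac{|E_l|}{N_l}\leq\frac{R_l}{N_l}=\frac1{l+2}.
\]
The right-hand side tends to $0$ as $l\to\infty$, so the limit along $A_\alpha$ exists and equals $0$. Thus $C\in\mathscr D_\alpha$.

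There is no real obstacle in this argument. The only points needing care are that the finitely many exceptional indices in $A_\alpha\setminus B$ are irrelevant to a limit as $l\to\infty$, and that disjointness of the blocks prevents contributions from other $l$ from affecting $I_l\setminus C$.
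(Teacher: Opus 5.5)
Your proposal is correct and follows essentially the same route as the paper's proof. Both choose $l_0$ with $A_\alpha\cap[l_0,\infty)\subseteq B$, use disjointness of the blocks to get $I_l\setminus C=E_l$, and bound the ratio by $R_l/N_l=1/(l+2)$.
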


\begin{proof}
Put $\displaystyle U
=
\bigcup_{l\in B}(I_l\setminus E_l)$.

Since $A_\alpha\setminus B$ is finite, there exists $l_0\in\N$ such
that $A_\alpha\cap[l_0,\infty)\subseteq B$.  Suppose that $l\in A_\alpha$
and $l\geq l_0$.  Then $l\in B$, and, as the intervals
$(I_j)_{j\in\N}$ are pairwise disjoint, we have
$I_l\cap U=I_l\setminus E_l$ and hence $I_l\setminus U=E_l$.
Consequently,
\[
\frac{|I_{l}\setminus U|}{N_{l}}
=
\frac{|E_{l}|}{N_{l}}
\leq
\frac{R_{l}}{N_{l}}
=
\frac{1}{l+2},
\]
and thus
\[
\lim_{\substack{l\in A_\alpha\\l\to\infty}}
\frac{|I_l\setminus U|}{N_l}=0.
\]
By the definition of $\mathscr D_\alpha$, it follows that
$U\in\mathscr D_\alpha$.
\end{proof}

For every $\alpha<\cfrak$, choose an ultrafilter $p_\alpha$ on $\N$
such that $\mathscr D_\alpha\subseteq p_\alpha$.
The ultrafilter $p_\alpha$ is free because $\mathscr D_\alpha$ contains the
cofinite filter.
In particular, whenever $B\subseteq A_\alpha$,
$A_\alpha\setminus B$ is finite, and
\[
E_l\subseteq I_l,
\qquad
|E_l|\leq R_l
\qquad(l\in B),
\]
we have $\bigcup_{l\in B}(I_l\setminus E_l)
\in p_\alpha$.

%% file: sections/06-local-homomorphism-fusion.tex
\section{Local fusion of circle-valued homomorphisms}

Fix $x\in G\setminus\{0\}$.  We shall construct a homomorphism that
does not vanish at $x$ and satisfies the prescribed ultrafilter limits for
all sequences whose distinguished coordinate belongs to a suitable
countable set.

\begin{lemma}\label{lem:countable-dependency-closure}
Let $(D_k)_{k\in\N}$ be a sequence defined recursively as follows:
\[
D_0=\supp x;
\]
\[
D_{k+1}
=
D_k\cup
\bigcup_{\substack{\alpha<\cfrak\\ \iota(\alpha)\in D_k}}
\ \bigcup_{n\in\N}\supp h_\alpha(n).
\]

Let $D=\bigcup_{k\in\N}D_k$.

Then $D$ is countable. Moreover, for every $\alpha<\cfrak$ such that
$\iota(\alpha)\in D$, we have
\[
\supp h_\alpha(n)\subseteq D
\qquad\text{for every }n\in\N.
\]
\end{lemma}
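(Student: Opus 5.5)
The plan is to prove by induction on $k$ that every $D_k$ is countable, and then to read off the closure property directly from the recursive definition.

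For the base case, $D_0=\supp x$ is finite, since $x\in\Q^{(\cfrak)}$. For the inductive step, assume $D_k$ is countable. The key observation is that $\iota$ is an injection from $\cfrak$ into $\cfrak$. Hence the set of indices
\[
 \{\alpha<\cfrak:\iota(\alpha)\in D_k\}=\iota^{-1}[D_k]
\]
has cardinality at most $|D_k|$, so it is countable. For each such $\alpha$ and each $n\in\N$, the set $\supp h_\alpha(n)$ is finite. So $D_{k+1}$ is $D_k$ together with a union of countably many finite sets, indexed by pairs $(\alpha,n)$ from a countable set. Using countable choice (available in ZFC), a countable union of countable sets is countable, so $D_{k+1}$ is countable. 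It follows that $D=\bigcup_k D_k$ is also countable.

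For the closure property, let $\alpha<\cfrak$ with $\iota(\alpha)\in D$. Since $D_0\subseteq D_1\subseteq\cdots$, there is some $k$ with $\iota(\alpha)\in D_k$. Then $\alpha$ is one of the indices in the union defining $D_{k+1}$, so
\[
 \supp h_\alpha(n)\subseteq D_{k+1}\subseteq D
\]
for every $n\in\N$.

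There is no real obstacle here. The only point requiring care is that injectivity of $\iota$ is exactly what makes each preimage $\iota^{-1}[D_k]$ countable; without it, a single coordinate could be the distinguished coordinate of $\cfrak$ many sequences. Note that the bound \eqref{eq:prepared-support} is not needed for countability; it becomes relevant only later in the paper.
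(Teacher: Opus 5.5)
Your proof is correct and follows the paper's route: induction on $k$ shows each $D_k$ is countable, and the closure property is read off from the definition of $D_{k+1}$. Your version also spells out the ``straightforward induction'' the paper leaves implicit, namely that injectivity of $\iota$ makes $\iota^{-1}[D_k]$ countable and that each $\supp h_\alpha(n)$ is finite.
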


\begin{proof}
A straightforward induction shows that $D_k$ is
countable for every $k\in\N$.  Hence, $D$ is countable.

Now suppose that $\iota(\alpha)\in D$.  Then $\iota(\alpha)\in D_k$ for
some $k$,
and the definition of $D_{k+1}$ gives
$\supp h_\alpha(n)\subseteq D_{k+1}\subseteq D$ for every $n\in\N$.
\end{proof}

From now on, put
\[
 G_D=G\cap\Q^{(D)} \qquad\text{and}\qquad
  \Gamma_D
 =
 \{\alpha<\cfrak:\iota(\alpha)\in D\}.
\]
Choose an enumeration without repetitions
$\Gamma_D=\{\alpha_j:j<J\}$, where $J\leq\omega$. Now, put
\[
 B_{\alpha_j}
 =
 A_{\alpha_j}\setminus\bigcup_{i<j}A_{\alpha_i}
 \qquad(j<J).
\]

It is clear that $(B_{\alpha_j}:j<J)$ are pairwise disjoint and, for each
$j<J$, we have
\begin{equation}\label{eq:local-block-properties}
 B_{\alpha_j}\subseteq A_{\alpha_j}
 \quad\text{and}\quad
 |A_{\alpha_j}\setminus B_{\alpha_j}|<\infty.
\end{equation}

For each $l\in\N$ there is at
most one $\alpha\in\Gamma_D$ such that $l\in B_\alpha$.  We define
\begin{equation}\label{eq:local-stage-set}
 X_l=
 \begin{cases}
  X_{\alpha,l},&\text{if $l\in B_\alpha$ for some $\alpha\in\Gamma_D$},\\
  \varnothing,&\text{if $l\notin\displaystyle\bigcup_{\alpha\in\Gamma_D}B_\alpha$}.
 \end{cases}
\end{equation}

\begin{lemma}
\label{lem:local-stage-properties}
For every $l\in\N$,
\begin{equation}\label{eq:local-stage-properties}
 X_l\subseteq G_D,
 \qquad
 |X_l|\leq N_l,
 \qquad
 X_l\text{ is $M_l$-independent}.
\end{equation}
\end{lemma}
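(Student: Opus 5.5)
The proof is a case split according to the definition \eqref{eq:local-stage-set} of $X_l$. Fix $l\in\N$.

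\emph{The empty case.} If $l\notin\bigcup_{\alpha\in\Gamma_D}B_\alpha$, then $X_l=\varnothing$. The empty set is contained in $G_D$, has cardinality $0\leq N_l$, and is $M_l$-independent, because $\Rel_G(\varnothing)$ contains only the empty family, which is the zero vector.

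\emph{The main case.} Otherwise, $l\in B_\alpha$ for a unique $\alpha\in\Gamma_D$; uniqueness follows from the pairwise disjointness of the $B_{\alpha_j}$. Then $X_l=X_{\alpha,l}$. By Lemma~\ref{lem:independent-block-subsequences}, this set has cardinality exactly $N_l$ and is $M_l$-independent, which gives the last two properties at once.

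It remains to check that $X_l\subseteq G_D$. An element of $X_l$ has the form $h_\alpha(n)-e_{\iota(\alpha)}$ with $n\in I_l$.
\begin{itemize}
\item It lies in $G$, because $h_\alpha(n)\in G$ and $e_{\iota(\alpha)}\in\Z^{(\cfrak)}\leq G$ by Lemma~\ref{lem:torsion-free-coordinatization}.
\item Its support is contained in $\supp h_\alpha(n)\cup\{\iota(\alpha)\}$.
\item Since $\alpha\in\Gamma_D$, we have $\iota(\alpha)\in D$.
\item Lemma~\ref{lem:countable-dependency-closure} gives $\supp h_\alpha(n)\subseteq D$ for every $n$.
\end{itemize}
Hence the element belongs to $\Q^{(D)}$, and therefore to $G_D=G\cap\Q^{(D)}$.

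No step is a real obstacle. The only points needing care are that $e_{\iota(\alpha)}$ belongs to $G$ (this is why the coordinatization was arranged so that $\Z^{(\cfrak)}\leq G$) and that the closure property of $D$ covers the supports of all terms of $h_\alpha$, not only those of $s_\alpha$.
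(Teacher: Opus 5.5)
Your proof is correct and follows the paper's argument. You use the same case split on the definition of $X_l$: Lemma~\ref{lem:independent-block-subsequences} gives the cardinality and $M_l$-independence, and Lemma~\ref{lem:countable-dependency-closure} together with $\iota(\alpha)\in D$ gives $X_l\subseteq G_D$. You only add details the paper leaves implicit, such as $e_{\iota(\alpha)}\in G$ and the $M_l$-independence of the empty set.
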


\begin{proof}
If $X_l=\varnothing$, all three assertions are immediate.  Otherwise,
$X_l=X_{\alpha,l}$ for the unique $\alpha\in\Gamma_D$ such that
$l\in B_\alpha$.  Since $\iota(\alpha)\in D$,
Lemma~\ref{lem:countable-dependency-closure}
shows that
\[
 X_{\alpha,l}
 =
 \{h_\alpha(n)-e_{\iota(\alpha)}:n\in I_l\}
 \subseteq G_D.
\]
Then Lemma~\ref{lem:independent-block-subsequences} shows that
$|X_l|=|X_{\alpha,l}|=N_l$ and that $X_l=X_{\alpha,l}$ is
$M_l$-independent.
\end{proof}

Since $D$ is countable, so is $\Q^{(D)}$, and hence $G_D$ is countable.  Fix
a surjective sequence $(g_l)_{l\in\N}$ in $G_D$, repeating elements if
needed.

\begin{lemma}
\label{lem:initial-homomorphism}
There is a homomorphism $\psi_0:G_D\to\T$ such that
\begin{equation}\label{eq:initial-homomorphism}
 \psi_0(x)=\frac12+\Z.
\end{equation}
\end{lemma}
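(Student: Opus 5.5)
The plan is to define $\psi_0$ on the cyclic subgroup $\langle x\rangle\leq G_D$ and then extend it by injectivity of $\T$, as recalled in Section~\ref{sec:preliminaries} via \cite[Theorem~21.1]{Fuchs1970}.

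The first step is to note that $x\in G_D$. Indeed, $x\in G$ and $\supp x=D_0\subseteq D$, so $x\in G\cap\Q^{(D)}=G_D$.

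The second step uses that $x\neq0$ and $G$ is torsion-free. Then $\langle x\rangle$ is infinite cyclic, and $n\mapsto nx$ is an isomorphism $\Z\to\langle x\rangle$. So the map
\[
 \chi:\langle x\rangle\to\T,\qquad \chi(nx)=\tfrac n2+\Z,
\]
is a well-defined homomorphism. Well-definedness is the only point that needs care, and torsion-freeness gives it: if $nx=n'x$, then $n=n'$.

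The third step applies the injectivity of the divisible group $\T$ to extend $\chi$ to a homomorphism $\psi_0:G_D\to\T$. This gives $\psi_0(x)=\chi(x)=\frac12+\Z$, which is \eqref{eq:initial-homomorphism}.

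There is no real obstacle here. The only things to check are membership of $x$ in $G_D$ and that $\langle x\rangle$ has no torsion, so that the value $\frac12$ imposes no relation. If $x$ had finite order $k$, one would need $k\cdot\frac12=0$, which can fail; torsion-freeness rules this out.
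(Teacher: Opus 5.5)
Your proposal is correct and matches the paper's proof: you define $nx\mapsto\frac n2+\Z$ on the infinite cyclic group $\langle x\rangle$, which is well defined by torsion-freeness, and extend it to $G_D$ using the injectivity of $\T$. You also check explicitly that $x\in G_D$ because $\supp x=D_0\subseteq D$, a step the paper leaves implicit.
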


\begin{proof}
Since $G_D$ is torsion-free and $x\neq0$, the cyclic group
$\langle x\rangle$ is isomorphic to $\Z$.  The assignment
\[
 nx\longmapsto \frac n2+\Z
\]
defines a homomorphism from $\langle x\rangle$ to $\T$.  Since $\T$ is an
injective Abelian group, this homomorphism extends to $G_D$.
\end{proof}

\begin{proposition}\label{prop:local-fusion}
There exists a homomorphism $\psi_D:G_D\to\T$ such that
\begin{enumerate}
\item $\psi_D(x)\neq0$;
\item for every $\alpha\in\Gamma_D$,
\begin{equation}\label{eq:local-admissibility}
 p_\alpha\text{-}\lim_n\psi_D(h_\alpha(n))
 =
 \psi_D(e_{\iota(\alpha)}).
\end{equation}
\end{enumerate}
\end{proposition}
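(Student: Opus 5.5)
We build $\psi_D$ as the pointwise limit of homomorphisms $\psi_0,\psi_1,\psi_2,\ldots$ on $G_D$, starting from the $\psi_0$ of Lemma~\ref{lem:initial-homomorphism}, by applying Lemma~\ref{lem:fusion-step} once at each stage $l$. At stage $l$ we fix a finite set $A_l\subseteq G_D$ of ``protected'' elements with $|A_l|\leq R_l=S_l+2l+2$. Concretely, $A_l$ consists of:
\begin{itemize}
\item $x$ and $g_0,\ldots,g_l$;
\item the coordinate vector $e_{\iota(\alpha)}$ for the $\alpha$ (if any) with $l\in B_\alpha$;
\item all earlier block elements $X_j$ for $j<l$, whose total size is at most $S_l$.
\end{itemize}
Since $R_l\geq |A_l|$, $X_l$ is $M_l$-independent (Lemma~\ref{lem:local-stage-properties}), and $Q_l$ was chosen exactly as Lemma~\ref{lem:fusion-step} requires, we may apply that lemma inside the group $G_D$ with $\varepsilon=\varepsilon_l$, $A=A_l$, $X=X_l$ and $\psi=\psi_l$. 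This yields $Y_l\subseteq X_l$ with $|X_l\setminus Y_l|\leq|A_l|\leq R_l$, and a homomorphism $\psi_{l+1}$ such that
\[
\|\psi_{l+1}(a)-\psi_l(a)\|<\varepsilon_l \quad (a\in A_l),
\qquad
\|\psi_{l+1}(y)\|<\varepsilon_l \quad (y\in Y_l).
\]

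Next we pass to the limit. Every $g\in G_D$ equals some $g_k$, so $g$ lies in $A_l$ for all $l\geq k$. Because $\sum_l\varepsilon_l<\infty$, the sequence $(\psi_l(g))_l$ is Cauchy in $\T$, and we define $\psi_D(g)$ as its limit. A pointwise limit of homomorphisms is a homomorphism. For item (i), note that $x\in A_l$ for every $l$, so
\[
\|\psi_D(x)-\tfrac12\|\leq\sum_l\varepsilon_l=\tfrac1{32},
\]
and hence $\psi_D(x)\neq0$.

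For item (ii), fix $\alpha\in\Gamma_D$ and $l\in B_\alpha$, and let $E_l$ be the set of $n\in I_l$ with $h_\alpha(n)-e_{\iota(\alpha)}\notin Y_l$. Then $|E_l|\leq R_l$. For $n\in I_l\setminus E_l$, the element $y=h_\alpha(n)-e_{\iota(\alpha)}$ lies in $Y_l$, so $\|\psi_{l+1}(y)\|<\varepsilon_l$. Moreover $y\in X_l\subseteq A_{j}$ for every $j>l$, so
\[
\|\psi_D(y)\|\leq\varepsilon_l+\sum_{j>l}\varepsilon_j\leq 2^{-(l+4)}.
\]
The vector $e_{\iota(\alpha)}$ is protected at stage $l$ as well, but we only need $\psi_D$ applied to it, so this causes no trouble. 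Since $\psi_D$ is a homomorphism,
\[
\|\psi_D(h_\alpha(n))-\psi_D(e_{\iota(\alpha)})\|\leq 2^{-(l+4)}
\qquad(n\in I_l\setminus E_l).
\]
Given $\delta>0$, let $B'$ be the set of $l\in B_\alpha$ with $2^{-(l+4)}<\delta$. Then $A_\alpha\setminus B'$ is finite by \eqref{eq:local-block-properties}. Lemma~\ref{lem:block-filters} gives $\bigcup_{l\in B'}(I_l\setminus E_l)\in\mathscr D_\alpha\subseteq p_\alpha$, which proves \eqref{eq:local-admissibility}.

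The main obstacle is the bookkeeping at each stage: the protected set must contain everything that must not drift afterwards, and its size must still stay below $R_l$. What must be protected are the earlier blocks $X_j$ for $j<l$, together with $x$, the $g_k$'s and the coordinate vector. A possible gap is that only $g_0,\ldots,g_l$ can be included, at most $l+1$ elements, and this fits within the $2l+2$ slack only if the current $e_{\iota(\alpha)}$ and $x$ are counted inside that slack too. This should be checked, but $S_l+(l+1)+2\leq R_l$ holds. A second point to check is that elements of $X_l$ protected at later stages have $\psi_{j}$-values that move by at most $\varepsilon_j$ per stage, which is what the bound on $\|\psi_D(y)\|$ above used.
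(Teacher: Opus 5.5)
Your proof is the paper's proof. The recursion applies Lemma~\ref{lem:fusion-step} with $(A_l,X_l,\psi_l,\varepsilon_l,R_l,Q_l)$, $\psi_D$ is defined as a Cauchy limit, and the sets $E_l$ are fed into Lemma~\ref{lem:block-filters}. Choosing $B'$ depending on $\delta$ is a harmless variant of the paper's intersection with $\N\setminus\bigcup_{l<L}I_l$. Protecting the whole earlier blocks $X_j$, instead of only the surviving parts $Y_j$ as the paper does, is also harmless: either way they contribute at most $\sum_{j<l}N_j=S_l$ elements.

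There is, however, a concrete slip in the count, exactly at the point you flagged. With $e_{\iota(\alpha)}$ added, you get $|A_l|\le S_l+(l+1)+2=S_l+l+3$. This is at most $R_l=S_l+2l+2$ only when $l\ge 1$. At $l=0$ we have $R_0=2$, while $A_0=\{x,g_0,e_{\iota(\alpha)}\}$ can have three elements when $0\in B_\alpha$. Then the hypothesis $R\ge|A|$ of Lemma~\ref{lem:fusion-step} fails. You cannot repair this by enlarging $R$, because $X_0$ is only known to be $M(R_0,Q_0)$-independent.

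The fix is to drop $e_{\iota(\alpha)}$ from $A_l$. It is never used: the final step needs only the additivity of $\psi_D$, not any stability of the values $\psi_l(e_{\iota(\alpha)})$. You then get $|A_l|\le S_l+l+2\le R_l$ for every $l$, which is exactly the paper's count, and the rest of your argument goes through unchanged.
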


\begin{proof}
Let $\psi_0$ be supplied by Lemma~\ref{lem:initial-homomorphism}.  We
recursively construct sequences $(\psi_l)_{l\in\N}$, $(A_l)_{l\in\N}$, and
$(Y_l)_{l\in\N}$ such that, for each $l\in\N$:
\begin{enumerate}
    \item\label{item:local-stage-homomorphism}
    $\psi_l:G_D\to\T$ is a homomorphism;
    \item\label{item:local-stage-control-set}
    $A_l=\bigcup_{j<l}Y_j\cup\{x\}\cup\{g_0,\ldots,g_l\}$;
    \item\label{item:local-stage-subset}
    $Y_l\subseteq X_l$;
    \item\label{item:local-stage-deletion}
    $|X_l\setminus Y_l|\leq R_l$;
    \item\label{item:local-stage-old}
    $\|\psi_{l+1}(a)-\psi_l(a)\|<\varepsilon_l$ for every $a\in A_l$;
    \item\label{item:local-stage-new}
    $\|\psi_{l+1}(y)\|<\varepsilon_l$ for every $y\in Y_l$.
\end{enumerate}
Having defined $(\psi_i)_{i\leq l}$, $(A_i)_{i\leq l}$, and
$(Y_i)_{i<l}$, we show how to define $A_l$, $Y_l$, and $\psi_{l+1}$.

Start by defining $A_l$ as in item~\ref{item:local-stage-control-set}.
At each preceding step
$j<l$, we have $Y_j\subseteq X_j$; hence, by
\eqref{eq:local-stage-properties}, $|Y_j|\leq |X_j|\leq N_j$.
It follows from \eqref{eq:local-stage-properties2} that
\begin{align*}
 |A_l|
 &\leq \sum_{j<l}|Y_j|+1+(l+1)\\
 &\leq \sum_{j<l}N_j+l+2\\
 &=S_l+l+2\leq R_l.
\end{align*}

We now apply Lemma~\ref{lem:fusion-step} with
$G=G_D$, $A=A_l$, $X=X_l$, $\psi=\psi_l$,
$\varepsilon=\varepsilon_l$, $R=R_l$, and $Q=Q_l$.  In the notation of
that lemma, $M=M(R_l,Q_l)=M_l$.  This application is valid because
$|A_l|\leq R_l$, $X_l$ is $M_l$-independent, and, since
$|X_l|\leq N_l$, every positive integer $m\leq R_l+|X_l|$ satisfies
$m\leq R_l+N_l$, so $q(m,\varepsilon_l)\leq Q_l$ by the definition of
$Q_l$.  We obtain a set
$Y_l\subseteq X_l$ and a homomorphism
$\psi_{l+1}:G_D\to\T$ satisfying
items~\ref{item:local-stage-subset}--\ref{item:local-stage-new}.  This
completes the recursive construction.

We next prove that the sequence $(\psi_l(g))_{l\in\N}$ converges for
every $g\in G_D$.  Choose $m$ such that $g_m=g$.  For every $l\geq m$
we have $g\in A_l$, and therefore
\[
 \|\psi_{l+1}(g)-\psi_l(g)\|<\varepsilon_l.
\]
Since $\sum_l\varepsilon_l<\infty$, the sequence
$(\psi_l(g))_l$ is Cauchy in $\T$.  The circle group is a complete metric
space, so a limit exists.  Define
\[
 \psi_D(g)=\lim_{l\to\infty}\psi_l(g)
 \qquad(g\in G_D).
\]

As addition in $\T$ is continuous and $\psi_D$ is the pointwise limit of
homomorphisms, $\psi_D$ is a homomorphism.

Since $x\in A_l$ for every $l$,
\eqref{eq:initial-homomorphism} and
item~\ref{item:local-stage-old} give
\[
 \left\|\psi_D(x)-\left(\frac12+\Z\right)\right\|
 \leq
 \sum_{l\in\N}\|\psi_{l+1}(x)-\psi_l(x)\|
 \leq
 \sum_{l\in\N}\varepsilon_l
 =\frac1{32}<\frac12.
\]
In particular, $\psi_D(x)\neq0$.

\begin{claim}
For all $l\in\N$ and all $y\in Y_l$, we have
\begin{equation}\label{eq:small-value-bound}
 \|\psi_D(y)\|<2\varepsilon_l.
\end{equation}
\end{claim}
\begin{proof}[Proof of the claim]
Fix $l\in\N$ and $y\in Y_l$.  For every $k>l$,
item~\ref{item:local-stage-control-set} gives $y\in A_k$, so
item~\ref{item:local-stage-old} gives
$\|\psi_{k+1}(y)-\psi_k(y)\|<\varepsilon_k$.  Therefore,
\begin{align*}
 \|\psi_D(y)\|
 &\leq
 \|\psi_{l+1}(y)\|+\sum_{k>l}\|\psi_{k+1}(y)-\psi_k(y)\|\\
 &<\varepsilon_l+\sum_{k>l}\varepsilon_k=2\varepsilon_l.
\end{align*}
\end{proof}

Now we verify \eqref{eq:local-admissibility}.  Fix
$\alpha\in\Gamma_D$.  If $l\in B_\alpha$, then the definition
\eqref{eq:local-stage-set} gives $X_l=X_{\alpha,l}$.  Define
\[
 E_l
 =
 \{n\in I_l:h_\alpha(n)-e_{\iota(\alpha)}\notin Y_l\}
 \qquad(l\in B_\alpha).
\]
The map $n\longmapsto h_\alpha(n)-e_{\iota(\alpha)}$
is a bijection from $I_l$ onto $X_{\alpha,l}=X_l$.  Consequently, by
item~\ref{item:local-stage-deletion},
\[
 |E_l|
 =
 |X_l\setminus Y_l|
 \leq R_l
 \qquad(l\in B_\alpha).
\]
Equation~\eqref{eq:local-block-properties} and
Lemma~\ref{lem:block-filters} now give
$\displaystyle
 U_\alpha
 =
 \bigcup_{l\in B_\alpha}(I_l\setminus E_l)
 \in p_\alpha.
$

We now intend to show that
\begin{equation}\label{eq:local-limit}
 p_\alpha\text{-}\lim_n
 \psi_D(h_\alpha(n)-e_{\iota(\alpha)})
 =0.
\end{equation}

Let $n\in U_\alpha$, and let $l$ be the unique index such that
$n\in I_l$.  Then $l\in B_\alpha$, $n\notin E_l$, and hence
$h_\alpha(n)-e_{\iota(\alpha)}\in Y_l$.
It follows from \eqref{eq:small-value-bound} that
\begin{equation}\label{eq:block-limit-estimate}
 \|\psi_D(h_\alpha(n)-e_{\iota(\alpha)})\|
 \leq 2\varepsilon_l.
\end{equation}

Fix $\delta>0$ and choose $L\in\N$ such that $2\varepsilon_L<\delta$.
The set $F_L=\bigcup_{l<L}I_l$ is finite, so, since $p_\alpha$ is free,
\[
 V_L=U_\alpha\cap(\N\setminus F_L)\in p_\alpha.
\]
If $n\in V_L$ and $n\in I_l$, then $n\notin F_L$ implies $l\geq L$.
Since $(\varepsilon_l)_l$ is decreasing, \eqref{eq:block-limit-estimate}
gives
\[
 \|\psi_D(h_\alpha(n)-e_{\iota(\alpha)})\|
 \leq 2\varepsilon_l\leq 2\varepsilon_L<\delta.
\]
Since $\delta>0$ was arbitrary, \eqref{eq:local-limit} follows.

Finally, the additivity of $\psi_D$ gives
\[
 \psi_D(h_\alpha(n))
 =
 \psi_D(h_\alpha(n)-e_{\iota(\alpha)})
 +
 \psi_D(e_{\iota(\alpha)}),
\]
and taking the $p_\alpha$-limit proves
\eqref{eq:local-admissibility}.
\end{proof}

The argument uses only that the local group $G_D$ is countable,
torsion-free, and Abelian.

%% file: sections/07-transfinite-extension.tex
\section{Extending the homomorphisms}

The previous section solves the problem on a countable subgroup: for each
nonzero $x\in G$, it constructs a homomorphism on $G_D$ that does not vanish
at $x$ and satisfies every prescribed limit equation associated with a
coordinate in $D$, where $D$ is a suitable countable subset of $\cfrak$.
We now extend this homomorphism to all of $G$.

\begin{proposition}\label{prop:transfinite-extension}
Let $D\subseteq\cfrak$ be such that, for every $\alpha<\cfrak$,
\[
 \iota(\alpha)\in D\quad\Longrightarrow\quad
 \supp h_\alpha(n)\subseteq D\quad\text{for every }n\in\N.
\]
Suppose that $\psi_D:G_D\to\T$ satisfies
\eqref{eq:local-admissibility} whenever $\iota(\alpha)\in D$.  Then
$\psi_D$ extends to a homomorphism $\psi:G\to\T$ satisfying
\[
 p_\alpha\text{-}\lim_n\psi(h_\alpha(n))
 =\psi(e_{\iota(\alpha)})
 \qquad(\alpha<\cfrak).
\]
\end{proposition}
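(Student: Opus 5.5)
We extend $\psi_D$ by transfinite recursion along the coordinates outside $D$, in increasing order, choosing each new value so that the limit equations are satisfied automatically.

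For $\gamma\leq\cfrak$ put $D_\gamma=D\cup\gamma$ and $G_\gamma=G\cap\Q^{(D_\gamma)}$. The groups $G_\gamma$ form an increasing continuous chain with $G_0=G_D$ and $G_\cfrak=G$, since every element has finite support. We build homomorphisms $\psi_\gamma:G_\gamma\to\T$, each extending the earlier ones and $\psi_D$. The inductive invariant is
\[
 p_\alpha\text{-}\lim_n\psi_\gamma(h_\alpha(n))=\psi_\gamma(e_{\iota(\alpha)})
 \qquad\text{whenever }\iota(\alpha)\in D_\gamma .
\]
This makes sense because $\supp h_\alpha(n)\subseteq\iota(\alpha)\subseteq D_\gamma$ by \eqref{eq:prepared-support}, and $e_{\iota(\alpha)}\in G_\gamma$.

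At limit stages we take unions. Homomorphisms that are compatible on a chain glue to a homomorphism, and any instance of the invariant involves only $\iota(\alpha)<\gamma$ or $\iota(\alpha)\in D$, so it was already verified at an earlier stage.

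At a successor step $\gamma\mapsto\gamma+1$ with $\gamma\in D$ nothing changes. Suppose $\gamma\notin D$. If $\gamma=\iota(\alpha)$ for some $\alpha$, this $\alpha$ is unique by injectivity of $\iota$. Then $h_\alpha(n)\in G_\gamma$ for every $n$, so $t=p_\alpha\text{-}\lim_n\psi_\gamma(h_\alpha(n))$ exists by compactness of $\T$. If $\gamma\notin\iota[\cfrak]$, let $t$ be arbitrary.

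We need a homomorphism $\psi_{\gamma+1}$ on $G_{\gamma+1}$ that extends $\psi_\gamma$ and sends $e_\gamma$ to $t$. Note that $e_\gamma\in G$ by Lemma~\ref{lem:torsion-free-coordinatization}. The subgroup $G_\gamma+\langle e_\gamma\rangle$ is an internal direct sum, because $\langle e_\gamma\rangle\cap G_\gamma=0$ for support reasons. So we may define the map on it by $g+ke_\gamma\mapsto\psi_\gamma(g)+kt$, and then extend to $G_{\gamma+1}$ by injectivity of $\T$. This extension may change nothing on $G_\gamma$, which is fine since injectivity only produces extensions.

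The new instance of the invariant is the one with $\iota(\alpha)=\gamma$, and it holds by the choice of $t$. All older instances are untouched because the maps agree on $G_\gamma$.

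The only point that needs care is that $G_{\gamma+1}$ may be strictly larger than $G_\gamma\oplus\langle e_\gamma\rangle$, since $G$ can contain elements such as $e_\gamma/k$. Extending by injectivity handles this, and the value at $e_\gamma$ is fixed before extending, so the new limit equation holds regardless of the choices made in that extension. The main thing to check is therefore the bookkeeping: the support condition \eqref{eq:prepared-support} guarantees that every $h_\alpha(n)$ already lies in the domain at the moment $e_{\iota(\alpha)}$ is assigned, and the hypothesis on $D$ covers the case $\iota(\alpha)\in D$. Finally set $\psi=\psi_\cfrak$.
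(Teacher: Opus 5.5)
Your proof is correct, and its core is the same as the paper's. You order the coordinates, use \eqref{eq:prepared-support} to guarantee that every $h_\alpha(n)$ has already been evaluated when $e_{\iota(\alpha)}$ is reached, and send $e_{\iota(\alpha)}$ to the $p_\alpha$-limit of those values.

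The implementation differs. The paper first extends $\psi_D$ to the divisible hull $\Q^{(D)}$. It then recurses only on homomorphisms $\theta_\xi:\Q\to\T$, one per coordinate, defines $\widetilde\psi=\sum_\xi\theta_\xi$ on $\Q^{(\cfrak)}=\bigoplus_\xi\Q e_\xi$, and restricts to $G$. Because the ambient group splits coordinatewise, there are no limit stages to glue. The fact that $G_{\gamma+1}$ may properly contain $G_\gamma\oplus\langle e_\gamma\rangle$ is also absorbed, since each $\theta_\xi$ is defined on all of $\Q$ at once.

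You instead stay inside $G$ and use the continuous chain $G_\gamma=G\cap\Q^{(D\cup\gamma)}$. This costs you the limit-stage gluing and an injectivity extension at each successor step, and you handle both correctly.

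One small slip: your justification that the invariant makes sense, $\supp h_\alpha(n)\subseteq\iota(\alpha)\subseteq D_\gamma$, is valid only when $\iota(\alpha)<\gamma$. When $\iota(\alpha)\in D$ and $\iota(\alpha)\geq\gamma$, you need the hypothesis on $D$ instead. You do invoke that hypothesis at the end, so this is only a matter of wording.
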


\begin{proof}
Since $\T$ is an injective Abelian group, $\psi_D$ extends from the subgroup
$G_D$ of $\Q^{(D)}$ to a homomorphism
\[
 \widetilde\psi_D:\Q^{(D)}\longrightarrow\T.
\]

By transfinite recursion, we construct homomorphisms
$\theta_\xi:\Q\to\T$ for each $\xi<\cfrak$.
If $\xi\in D$, let
\[
 \theta_\xi(q)=\widetilde\psi_D(qe_\xi)
 \qquad(q\in\Q).
\]
Suppose next that $\xi\notin D$ and $\xi\in\iota[\cfrak]$.
Since $\iota$ is injective, there is a unique $\alpha<\cfrak$ such that
$\xi=\iota(\alpha)$.  By \eqref{eq:prepared-support}, all coordinates in
$h_\alpha(n)$ precede $\xi$, so the values
\begin{equation}\label{eq:stage-evaluation}
 w_{\alpha,n}=
 \sum_{\eta\in\supp h_\alpha(n)}
 \theta_\eta(h_\alpha(n)(\eta))
\end{equation}
are already defined.  Let
\begin{equation}\label{eq:compact-stage-limit}
 t_\alpha=p_\alpha\text{-}\lim_n w_{\alpha,n},
\end{equation}
which exists by compactness of $\T$.

Define $\rho:\Z\to\T$ by $\rho(k)=kt_\alpha$.  The injectivity of $\T$
supplies an extension $\theta_\xi:\Q\to\T$.  In particular,
\begin{equation}\label{eq:coordinate-one}
 \theta_\xi(1)=t_\alpha.
\end{equation}
If $\xi\notin D\cup\iota[\cfrak]$, put $\theta_\xi=0$.

The family $(\theta_\xi)_{\xi<\cfrak}$ induces a homomorphism on the
direct sum $\Q^{(\cfrak)}$, namely
\begin{equation}\label{eq:global-homomorphism}
 \widetilde\psi:\Q^{(\cfrak)}\longrightarrow\T,
 \qquad
 \widetilde\psi(z)=
 \sum_{\xi\in\supp z}\theta_\xi(z(\xi)).
\end{equation}
Finally, let $\psi=\widetilde\psi\mathbin{\upharpoonright}G$.

We first verify that $\psi$ extends $\psi_D$.  If $z\in\Q^{(D)}$, then
\[
 \widetilde\psi(z)
 =\sum_{\xi\in\supp z}\theta_\xi(z(\xi))
 =\sum_{\xi\in\supp z}\widetilde\psi_D(z(\xi)e_\xi)
 =\widetilde\psi_D(z).
\]
Thus $\widetilde\psi$ extends $\widetilde\psi_D$, so its restriction $\psi$
extends $\psi_D$.

We now verify the ultrafilter limit identities.  Fix $\alpha<\cfrak$.
Suppose first that $\iota(\alpha)\in D$.  The hypothesis on $D$ gives
$\supp h_\alpha(n)\subseteq D$ for every $n\in\N$.  Hence
$h_\alpha(n),e_{\iota(\alpha)}\in G_D$, and, since $\psi$ extends $\psi_D$,
\eqref{eq:local-admissibility} gives
\[
 p_\alpha\text{-}\lim_n\psi(h_\alpha(n))
 =\psi(e_{\iota(\alpha)}).
\]

Suppose now that $\iota(\alpha)\notin D$.  By
\eqref{eq:prepared-support},
\[
 \supp h_\alpha(n)\subseteq\iota(\alpha)
 \qquad(n\in\N),
\]
so every coordinate occurring in $h_\alpha(n)$ precedes
$\iota(\alpha)$.  Consequently, \eqref{eq:global-homomorphism} and
\eqref{eq:stage-evaluation} give
\[
 \psi(h_\alpha(n))
 =\widetilde\psi(h_\alpha(n))
 =w_{\alpha,n}.
\]
Therefore, by \eqref{eq:compact-stage-limit} and
\eqref{eq:coordinate-one},
\[
 p_\alpha\text{-}\lim_n\psi(h_\alpha(n))
 =t_\alpha
 =\theta_{\iota(\alpha)}(1)
 =\psi(e_{\iota(\alpha)}).
\]
\end{proof}

Proposition~\ref{prop:local-fusion} provides a homomorphism
$\psi_D:G_D\to\T$ that does not vanish at the chosen nonzero vector $x$ and
satisfies \eqref{eq:local-admissibility}.  By
Proposition~\ref{prop:transfinite-extension}, it extends to a homomorphism on
$G$ satisfying all the limit identities stated there.

\begin{corollary}\label{cor:admissible-homomorphisms}
For every $0\neq x\in G$ there exists a homomorphism
$\psi_x:G\to\T$ such that
\begin{enumerate}
\item\label{item:detect-x} $\psi_x(x)\neq0$;
\item\label{item:all-admissible} for every $\alpha<\cfrak$,
\[
 p_\alpha\text{-}\lim_n\psi_x(h_\alpha(n))
 =\psi_x(e_{\iota(\alpha)}).
\]
\end{enumerate}
\end{corollary}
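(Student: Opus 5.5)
The corollary is obtained by composing the two propositions just proved. Fix $0\neq x\in G$ and form the countable set $D=\bigcup_k D_k$ of Lemma~\ref{lem:countable-dependency-closure}, starting from $D_0=\supp x$. With this $D$ we define $G_D$, $\Gamma_D$, the disjoint blocks $B_\alpha$ and the stage sets $X_l$ exactly as in the preceding section. Proposition~\ref{prop:local-fusion} then gives a homomorphism $\psi_D:G_D\to\T$ with $\psi_D(x)\neq0$. Note that $x\in G_D$, since $\supp x=D_0\subseteq D$. Moreover, $\psi_D$ satisfies \eqref{eq:local-admissibility} for every $\alpha\in\Gamma_D$, that is, for every $\alpha$ with $\iota(\alpha)\in D$.

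Next we check the closure hypothesis of Proposition~\ref{prop:transfinite-extension}. The second assertion of Lemma~\ref{lem:countable-dependency-closure} says precisely that $\iota(\alpha)\in D$ implies $\supp h_\alpha(n)\subseteq D$ for all $n$. Hence that proposition applies. It yields a homomorphism $\psi:G\to\T$ extending $\psi_D$ such that
\[
 p_\alpha\text{-}\lim_n\psi(h_\alpha(n))=\psi(e_{\iota(\alpha)})
 \qquad(\alpha<\cfrak).
\]
This is item~\ref{item:all-admissible}. Set $\psi_x=\psi$. Since $x\in G_D$ and $\psi$ extends $\psi_D$, we get $\psi_x(x)=\psi_D(x)\neq0$, which is item~\ref{item:detect-x}.

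There is no real obstacle here. The only point needing care is bookkeeping: the same set $D$ must be used both to build $\psi_D$ in Proposition~\ref{prop:local-fusion} and to apply Proposition~\ref{prop:transfinite-extension}. We must also confirm that $e_{\iota(\alpha)}$ and $h_\alpha(n)$ lie in $G$ (by Lemma~\ref{lem:torsion-free-coordinatization}), and lie in $G_D$ when $\iota(\alpha)\in D$, so that the limit identity from the local stage transfers unchanged to $\psi$.
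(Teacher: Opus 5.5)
Your proposal is correct and matches the paper's argument: apply Proposition~\ref{prop:local-fusion} on $G_D$ for the closure $D$ of Lemma~\ref{lem:countable-dependency-closure}, then extend via Proposition~\ref{prop:transfinite-extension}, whose hypothesis is exactly the closure property of $D$. Your observation that $x\in G_D$ because $\supp x=D_0\subseteq D$ spells out a step the paper leaves implicit in deducing $\psi_x(x)=\psi_D(x)\neq0$.
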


%% file: sections/08-group-topologies.tex
\section{The group topologies}\label{sec:group-topologies}

For each $0\neq x\in G$ choose a homomorphism $\psi_x$ as in
Corollary~\ref{cor:admissible-homomorphisms}.  Define
\[
 \Delta_G:G\longrightarrow
 \T^{\,G\setminus\{0\}},
 \qquad
 \Delta_G(z)=(\psi_x(z))_{x\neq0},
\]
and give $G$ the initial topology $\tau_G$ induced by this map.

\begin{lemma}\label{lem:initial-topology}
The topology $\tau_G$ is a Tychonoff group topology.  For
every $\alpha<\cfrak$, the free ultrafilter $p_\alpha$ satisfies
\begin{equation}\label{eq:group-ultrafilter-limit}
 p_\alpha\text{-}\lim_n h_\alpha(n)=e_{\iota(\alpha)}.
\end{equation}
\end{lemma}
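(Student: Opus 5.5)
The plan is to verify two things: that $\tau_G$ is a Hausdorff group topology (hence Tychonoff), and that the ultrafilter limits hold coordinatewise in $\T^{G\setminus\{0\}}$.

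\textbf{The topology.} By Corollary~\ref{cor:admissible-homomorphisms}\ref{item:detect-x}, the family $(\psi_x)_{x\neq0}$ separates points. Indeed, if $z\neq z'$, then taking $x=z-z'$ gives $\psi_x(z)-\psi_x(z')=\psi_x(x)\neq0$. Hence $\Delta_G$ is an injective homomorphism into the compact Abelian group $\T^{G\setminus\{0\}}$, which is Hausdorff. As remarked at the end of Section~\ref{sec:preliminaries}, the initial topology makes $\Delta_G$ a topological group isomorphism onto its image. Concretely, the group operations on $G$ are continuous because $\psi_x(z\pm w)=\psi_x(z)\pm\psi_x(w)$ and the operations of $\T$ are continuous, so each $\psi_x\circ(\text{operation})$ is continuous. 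Therefore $(G,\tau_G)$ is homeomorphic to a subgroup of a compact Hausdorff group. Subspaces of compact Hausdorff spaces are Tychonoff, so $\tau_G$ is a Tychonoff group topology.

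\textbf{The limits.} A net, or ultrafilter-indexed family, converges in an initial topology exactly when each of its images under the defining maps converges. So it suffices to show that, for every $x\neq0$,
\[
 p_\alpha\text{-}\lim_n\psi_x(h_\alpha(n))=\psi_x(e_{\iota(\alpha)}),
\]
and this is precisely Corollary~\ref{cor:admissible-homomorphisms}\ref{item:all-admissible}. To see the reduction directly, note that a basic neighborhood of $e_{\iota(\alpha)}$ has the form $\bigcap_{k<r}\psi_{x_k}^{-1}[W_k]$, where each $W_k$ is an open neighborhood of $\psi_{x_k}(e_{\iota(\alpha)})$ in $\T$. For each $k<r$, the set $\{n:\psi_{x_k}(h_\alpha(n))\in W_k\}$ belongs to $p_\alpha$. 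Since $p_\alpha$ is closed under finite intersections, the set of $n$ with $h_\alpha(n)$ in the whole neighborhood also belongs to $p_\alpha$. Finally, $p_\alpha$ is free because it contains $\mathscr D_\alpha$, which contains the cofinite filter.

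\textbf{Obstacle.} There is no real obstacle at this stage; all the difficulty was absorbed into Corollary~\ref{cor:admissible-homomorphisms}. The only point requiring care is to state that the initial topology of a point-separating family of homomorphisms into $\T$ is a Hausdorff group topology. This follows from the embedding $\Delta_G$.
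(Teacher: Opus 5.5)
Your proof is correct and follows essentially the same route as the paper. Point separation via $x=z-w$ and Corollary~\ref{cor:admissible-homomorphisms}\ref{item:detect-x} makes $\Delta_G$ an injective homomorphism into the compact Hausdorff group $\T^{G\setminus\{0\}}$, which gives a Tychonoff group topology, and the limit \eqref{eq:group-ultrafilter-limit} follows coordinatewise from item~\ref{item:all-admissible}. Your explicit check on basic neighborhoods, using closure of $p_\alpha$ under finite intersections, simply unpacks the paper's remark that convergence in a product is coordinatewise.
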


\begin{proof}
The initial topology of a homomorphism into a topological group is a group
topology.  The diagonal is injective: if $z\neq w$, then $x=z-w\neq0$, and
Corollary~\ref{cor:admissible-homomorphisms}\ref{item:detect-x} gives
$\psi_x(x)\neq0$, so
$\psi_x(z)\neq\psi_x(w)$.  Since $\T$ is Hausdorff, the induced
topology is Hausdorff.  In fact, $\Delta_G$ identifies $(G,\tau_G)$ with a
subspace of a power of the compact Hausdorff group $\T$, so
$(G,\tau_G)$ is Tychonoff.

For a fixed $\alpha<\cfrak$,
Corollary~\ref{cor:admissible-homomorphisms}\ref{item:all-admissible} gives
\[
 p_\alpha\text{-}\lim_n\psi_x(h_\alpha(n))
 =\psi_x(e_{\iota(\alpha)})\qquad(x\neq0).
\]
Hence $\Delta_G(h_\alpha(n))$ converges along $p_\alpha$ to
$\Delta_G(e_{\iota(\alpha)})$, since convergence in a product is
coordinatewise.  As $\tau_G$ is induced by the injective map
$\Delta_G$, this is precisely \eqref{eq:group-ultrafilter-limit}.
\end{proof}

We are now ready to give the proofs of Theorem~\ref{thm:main} and
Proposition~\ref{prop:rational}.

\begin{proof}[Proof of the main results]
Lemma~\ref{lem:initial-topology} shows that $\tau_G$ is a Hausdorff group
topology.  Let $s:\N\to G$ be injective.  The enumeration in
Section~\ref{sec:block-ultrafilters} gives a unique $\alpha<\cfrak$ with
$s=s_\alpha$.  By
Lemma~\ref{lem:independent-block-subsequences}, the map
$\varphi_\alpha:\N\to\N$ is strictly increasing and
$h_\alpha=s\circ\varphi_\alpha$.  By
Lemma~\ref{lem:initial-topology} and
\eqref{eq:group-ultrafilter-limit}, the free ultrafilter $p_\alpha$
satisfies
\[
 p_\alpha\text{-}\lim_n s\bigl(\varphi_\alpha(n)\bigr)
 =e_{\iota(\alpha)}\neq0.
\]
Thus all the hypotheses of Lemma~\ref{lem:topological-reduction} hold,
so $(G,\tau_G)$ is countably compact and every convergent sequence in it is
eventually constant.  This proves Theorem~\ref{thm:main}.  Taking
$G=\Q^{(\cfrak)}$ and transporting the resulting topology along an algebraic
isomorphism $\Q^{(\cfrak)}\cong(\mathbb R,+)$ gives
Proposition~\ref{prop:rational}.
\end{proof}

%% file: sections/09-wallace-semigroup.tex
\section{The Wallace semigroup}

Specialize the preceding construction to $G=\Z^{(\cfrak)}$, with its
canonical basis, and let $\tau=\tau_G$ be the resulting topology.  Consider
\[
 P=\{z\in\Z^{(\cfrak)}:z(\xi)\geq0\text{ for every }\xi<\cfrak\}\subseteq \Z^{(\cfrak)}.
\]
Of course, $P$ is a subsemigroup of $\mathbb Z^{(\cfrak)}$ under addition, so
it is two-sided cancellative and Tychonoff, and it is clearly not a group.
It remains to see that $P$ is countably compact.

\begin{proof}[Proof of Corollary~\ref{cor:wallace}]
Let $A\subseteq P$ be countably infinite and choose an injective enumeration
$s:\N\to A$.  Let $\alpha<\cfrak$ be the unique index for which
$s=s_\alpha$.  The subsequence
$h_\alpha=s_\alpha\circ\varphi_\alpha$ remains in $A$, and
\[
 p_\alpha\text{-}\lim_n h_\alpha(n)=e_{\iota(\alpha)}
\]
in $\Z^{(\cfrak)}$.  Both the sequence and its limit lie in $P$, so the same
convergence holds in the subspace topology.  Since $p_\alpha$ is free and
$h_\alpha$ is injective, $e_{\iota(\alpha)}$ is an accumulation point of
$A$ in $P$.
Thus, $P$ is countably compact.
\end{proof}

%% file: sections/10-consequences-and-concluding-remarks.tex
\section{Further consequences and concluding remarks}\label{sec:consequences}

We collect consequences that use the main theorem together with earlier
reductions.  We begin with an intrinsic strengthening of the topology
constructed in Section~\ref{sec:group-topologies}.

\begin{proposition}\label{prop:large-closures}
Let $G$ be a torsion-free Abelian group of cardinality $\cfrak$, endowed with
the topology $\tau_G$ constructed in
Section~\ref{sec:group-topologies}.  If $A\subseteq G$ is
infinite, then
\[
 |\overline A|=\cfrak.
\]
Consequently, every infinite closed subset of $(G,\tau_G)$ has cardinality
$\cfrak$.
\end{proposition}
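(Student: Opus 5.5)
Since $|G|=\cfrak$, only $|\overline A|\geq\cfrak$ needs proof. It also suffices to treat a countably infinite $A$, because enlarging $A$ only enlarges its closure. The idea is that an infinite set $A$ has $\cfrak$ injective enumerations, each of which is some $s_\alpha$ in the fixed enumeration. By Lemma~\ref{lem:initial-topology}, each such enumeration has the accumulation point $e_{\iota(\alpha)}$, and these points are pairwise distinct because $\iota$ is injective.

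Concretely, fix a countably infinite $A\subseteq G$ and a bijection $s:\N\to A$. For every permutation $\pi$ of $\N$, the map $s\circ\pi$ is an injective sequence in $G$, so $s\circ\pi=s_{\alpha(\pi)}$ for a unique $\alpha(\pi)<\cfrak$. Distinct permutations give distinct sequences, so $\pi\mapsto\alpha(\pi)$ is injective. There are $\cfrak$ permutations of $\N$, so the set $\Lambda=\{\alpha<\cfrak: s_\alpha[\N]=A\}$ has cardinality $\cfrak$. (Alternatively, one could use the $\cfrak$ injective sequences whose ranges are infinite subsets of $A$.)

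For each $\alpha\in\Lambda$, Lemma~\ref{lem:independent-block-subsequences} gives $h_\alpha=s_\alpha\circ\varphi_\alpha$, which is injective with values in $A$. By \eqref{eq:group-ultrafilter-limit}, $p_\alpha\text{-}\lim_n h_\alpha(n)=e_{\iota(\alpha)}$. Since $p_\alpha$ is free, every neighbourhood of $e_{\iota(\alpha)}$ contains $h_\alpha(n)$ for a $p_\alpha$-large, hence infinite, set of $n$. So $e_{\iota(\alpha)}$ is an accumulation point of $A$, and in particular $e_{\iota(\alpha)}\in\overline A$. Since $\iota$ is injective, the vectors $e_{\iota(\alpha)}$ for $\alpha\in\Lambda$ are pairwise distinct. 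Hence $|\overline A|\geq|\Lambda|=\cfrak$.

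For the final sentence of the statement, take $A$ to be an infinite closed set; then $\overline A=A$, so $|A|=\cfrak$. There is no real obstacle here. The only points needing care are that the enumeration $(s_\alpha)$ lists \emph{all} injective sequences, so every reordering of $A$ appears, and that the accumulation points depend on the index $\alpha$ only through the injective map $\iota$. Both are already fixed in Section~\ref{sec:block-ultrafilters}.
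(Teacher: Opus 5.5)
Your proposal is correct and follows essentially the same route as the paper's proof. Both arguments pass to a countably infinite subset, use the $\cfrak$ bijections from $\N$ onto it (each of which is some $s_\alpha$), take the limit points $e_{\iota(\alpha)}\in\overline A$ given by Lemma~\ref{lem:initial-topology}, and conclude by the injectivity of $\iota$.
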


\begin{proof}
Choose a countably infinite set $B\subseteq A$.  There are $\cfrak$ bijections
from $\N$ onto $B$.  Each of them occurs in the enumeration
$(s_\alpha)_{\alpha<\cfrak}$ fixed in Section~\ref{sec:block-ultrafilters}.
For such an index $\alpha$, the sequence
$h_\alpha=s_\alpha\circ\varphi_\alpha$ takes its values in $B$, and
Lemma~\ref{lem:initial-topology} gives
\[
 p_\alpha\text{-}\lim_n h_\alpha(n)=e_{\iota(\alpha)}.
\]
Since $p_\alpha$ is free, $e_{\iota(\alpha)}\in\overline B$.  The map $\iota$
is injective, so the $\cfrak$ bijections from $\N$ onto $B$ yield $\cfrak$
distinct points of $\overline B$.  Hence
\[
 \cfrak\leq |\overline B|
 \leq |\overline A|
 \leq |G|=\cfrak.
\]
\end{proof}

Shakhmatov asked whether there exists in ZFC a countably compact free Abelian
group in which every infinite closed subset has cardinality at least $\cfrak$
\cite[p.~13]{Shakhmatov2001}.  For $G=\Z^{(\cfrak)}$,
Proposition~\ref{prop:large-closures} shows that the topology constructed here
provides such a group, thereby answering the question in ZFC\@.

The bicyclic semigroup $C(p,q)$ is the monoid generated by two elements $p$
and $q$ subject to the single relation $qp=1$; every element of $C(p,q)$ has
a unique representation $p^nq^m$ with $n,m\in\N$.  Banakh, Dimitrova, and
Gutik proved that the existence of a torsion-free Abelian countably compact
topological group without nontrivial convergent sequences implies the
existence of a Tychonoff countably compact topological semigroup containing a
copy of the bicyclic semigroup
\cite[Theorem~6.6]{BanakhDimitrovaGutik2010}.  Their Problem~7.1 asks whether
such a semigroup exists in ZFC.

\begin{corollary}\label{cor:bicyclic}
In ZFC there exists a Tychonoff countably compact topological semigroup
containing a copy of the bicyclic semigroup.
\end{corollary}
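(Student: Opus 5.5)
The plan is to combine Theorem~\ref{thm:main} with the reduction of Banakh, Dimitrova, and Gutik \cite[Theorem~6.6]{BanakhDimitrovaGutik2010}. First I will produce, in ZFC, a torsion-free Abelian countably compact topological group without nontrivial convergent sequences. Take $G=\Z^{(\cfrak)}$, which is torsion-free of cardinality $\cfrak$, and give it the topology $\tau_G$ of Section~\ref{sec:group-topologies}. By Lemma~\ref{lem:initial-topology}, $\tau_G$ is a Tychonoff group topology. By the proof of Theorem~\ref{thm:main}, $(G,\tau_G)$ is countably compact, and every convergent sequence in it is eventually constant. In particular it has no nontrivial convergent sequences.

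The second step is to check that this group satisfies the hypotheses of \cite[Theorem~6.6]{BanakhDimitrovaGutik2010} exactly as they are stated there. The hypotheses I expect are that the group be Abelian, torsion-free, countably compact and Hausdorff, possibly Tychonoff, with no nontrivial convergent sequences. Our group is Tychonoff, and every convergent sequence is eventually constant, so none of the likely variants of the hypothesis causes trouble. That theorem then yields a Tychonoff countably compact topological semigroup containing an algebraic copy of the bicyclic semigroup $C(p,q)$. This proves the corollary. No additional set-theoretic hypotheses enter, because Theorem~\ref{thm:main} is proved in ZFC and the Banakh--Dimitrova--Gutik reduction is a ZFC implication.

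The only real point of care, and the step I expect to be the main obstacle, is the match of hypotheses. Their theorem might ask for a specific form of the group, such as a direct sum of copies of $\Z$, or for the absence of nontrivial convergent sequences in some product or power. If it asks for a free Abelian group, the choice $G=\Z^{(\cfrak)}$ already covers it. If it needs something like countable compactness of a cone, Section~9 supplies countable compactness of $P$. I do not expect any further argument to be needed.
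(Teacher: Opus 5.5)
Your proposal is correct and is essentially the paper's proof: you apply Theorem~\ref{thm:main} to $\Z^{(\cfrak)}$ to obtain a Tychonoff torsion-free Abelian countably compact group without nontrivial convergent sequences, and you feed that group into \cite[Theorem~6.6]{BanakhDimitrovaGutik2010}. Your contingency remarks (a free Abelian form, or the cone $P$ of Section~9) are harmless but not needed, since that theorem only asks for such a group.
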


\begin{proof}
Theorem~\ref{thm:main}, applied, for example, to $\Z^{(\cfrak)}$, supplies the
group required in \cite[Theorem~6.6]{BanakhDimitrovaGutik2010}.
\end{proof}

Thus Problem~7.1 of \cite{BanakhDimitrovaGutik2010} is answered in ZFC\@.

There is a parallel consequence for paratopological groups.  Recall that a
paratopological group is a group endowed with a topology for which
multiplication is continuous.  Guran
asked whether there exists a Hausdorff countably compact paratopological group
which is not a topological group and whether every Hausdorff countably compact
paratopological group is totally bounded
\cite[Problems~1 and~2]{Guran1998}.  Banakh and Ravsky isolated the hypothesis
\(\mathrm{TT}\): the existence of an infinite torsion-free Abelian countably
compact topological group without nontrivial convergent sequences.  Under
\(\mathrm{TT}\), their Lemma~3.21 and Example~3.22 construct a functionally
Hausdorff countably compact paratopological group which is not a topological
group and whose underlying abstract group is free Abelian
\cite{BanakhRavsky2020}.

\begin{corollary}\label{cor:paratopological}
In ZFC there exists a functionally Hausdorff countably compact
paratopological group which is not a topological group and whose underlying
abstract group is free Abelian.
\end{corollary}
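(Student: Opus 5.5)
This corollary follows by combining Theorem~\ref{thm:main} with the reduction of Banakh and Ravsky. The hypothesis $\mathrm{TT}$ of \cite{BanakhRavsky2020} asks for an infinite torsion-free Abelian countably compact topological group without nontrivial convergent sequences. We apply Theorem~\ref{thm:main} to $G=\Z^{(\cfrak)}$, equipped with the topology $\tau_G$ of Section~\ref{sec:group-topologies}. This group is infinite and torsion-free. It is Hausdorff (indeed Tychonoff, by Lemma~\ref{lem:initial-topology}) and countably compact, and every convergent sequence in it is eventually constant. Hence $\mathrm{TT}$ holds in ZFC.

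Next we invoke Lemma~3.21 and Example~3.22 of \cite{BanakhRavsky2020}. They construct, from any group witnessing $\mathrm{TT}$, a functionally Hausdorff countably compact paratopological group that is not a topological group. Its underlying abstract group is free Abelian.

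The only point needing care is the hypotheses of that construction. Banakh and Ravsky may need the witness for $\mathrm{TT}$ to satisfy extra conditions. Examples would be a free Abelian underlying group, a countably compact positive cone, or a suitable cardinality.

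If a free Abelian underlying group is required, we use $G=\Z^{(\cfrak)}$ itself rather than a general torsion-free group. If their example uses a cone-type subsemigroup whose countable compactness must be checked, the argument of Corollary~\ref{cor:wallace} supplies it. There the nonnegative cone $P$ is countably compact because each injective sequence in $P$ has the $p_\alpha$-limit $e_{\iota(\alpha)}\in P$.

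The main obstacle is matching these hypotheses exactly. We first check that \cite{BanakhRavsky2020} assumes nothing beyond $\mathrm{TT}$ as stated. If it does, we supply the missing property from the construction above, using Proposition~\ref{prop:large-closures} or the cone argument as needed. Everything else is a direct citation.
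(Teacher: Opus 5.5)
Your proposal is correct and matches the paper's proof: applying Theorem~\ref{thm:main} to $\Z^{(\cfrak)}$ establishes $\mathrm{TT}$ in ZFC, and Lemma~3.21 and Example~3.22 of Banakh and Ravsky then give the required paratopological group. Your contingency paragraphs are not needed, because the paper applies that construction under $\mathrm{TT}$ alone, without extra hypotheses such as a countably compact cone.
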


\begin{proof}
Theorem~\ref{thm:main} establishes \(\mathrm{TT}\) in ZFC\@.  Apply
\cite[Lemma~3.21 and Example~3.22]{BanakhRavsky2020}.
\end{proof}

This answers both problems of Guran.  Indeed, Banakh and Ravsky show as part
of the construction that the example is not left-$\omega$-precompact; in
particular, it is not totally bounded
\cite[Example~3.22 and the discussion following it]{BanakhRavsky2020}.  It
also answers all three clauses of
\cite[Problem~3.23]{BanakhRavsky2020}: the example is Hausdorff and countably
compact; it is Hausdorff, countably pracompact, and Baire; and it is $T_1$ and
countably compact.  In the terminology of
\cite[Problems~2.4.1 and~2.4.3]{ArhangelskiiTkachenko2008}, it gives a ZFC
Hausdorff countably compact paratopological group which is not a topological
group and is not precompact.

Another consequence is a monothetic refinement of the Wallace phenomenon.
A topological monoid $M$ is monothetic if it is topologically generated by one
element, that is, if $M=\overline{\{a^n:n\in\N\}}$ for some $a\in M$.  Banakh,
Bardyla, Guran, Gutik, and Ravsky asked whether the local compactness assumption
in their theorem on monothetic monoids can be weakened to local countable
compactness.  Under \(\mathrm{TT}\), they constructed a Hausdorff
countably compact monothetic topological monoid which embeds into a topological
group but is not a group \cite[Remark~4.5]{BanakhBardylaGuranGutikRavsky2020}.

\begin{corollary}\label{cor:monothetic-monoid}
In ZFC there exists a Hausdorff countably compact monothetic topological
monoid which is a submonoid of a topological group but is not a group.
\end{corollary}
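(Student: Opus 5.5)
The plan is to derive Corollary~\ref{cor:monothetic-monoid} in the same way as Corollaries~\ref{cor:bicyclic} and~\ref{cor:paratopological}. Theorem~\ref{thm:main} establishes the hypothesis $\mathrm{TT}$ in ZFC, and \cite[Remark~4.5]{BanakhBardylaGuranGutikRavsky2020} builds the required monoid from $\mathrm{TT}$. To make the argument self-contained, I would also reconstruct the monoid explicitly from the group $(\Z^{(\cfrak)},\tau)$ of Section~\ref{sec:group-topologies}.

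The construction I have in mind runs as follows. Fix the topological group $H=(\Z^{(\cfrak)},\tau)$. It is torsion-free, countably compact, and has no nontrivial convergent sequences. By Proposition~\ref{prop:large-closures}, every infinite closed subset of $H$ has size $\cfrak$. Choose $a\in H$, say $a=e_0$, and let $M$ be the closure of $\{na:n\in\N\}$ in $H$.

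First, $M$ is a closed subsemigroup of $H$. The set $\{na:n\in\N\}$ is closed under addition, and by continuity of addition so is its closure. Second, $M$ is countably compact, since a closed subspace of a countably compact space is countably compact. Third, $M$ is Hausdorff and monothetic by definition, and it is a submonoid of the topological group $H$.

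The remaining issue, which I expect to be the main obstacle, is showing that $M$ is not a group. One needs $-a\notin M$, or more generally that $M$ is not closed under inverses. A bare countably compact group topology does not obviously prevent $-a$ from being an accumulation point of $\N a$.

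This is where I would use the freedom in the construction. The homomorphisms $\psi_x$ of Corollary~\ref{cor:admissible-homomorphisms} are built coordinatewise. I would modify the choice so that the closure of $\N e_0$ avoids $-e_0$: for instance, by adding to the family defining $\tau$ a continuous homomorphism into a compact group that separates $-e_0$ from $\N e_0$. Adding such a homomorphism does not destroy the limit identities \eqref{eq:group-ultrafilter-limit} if it is itself chosen via Proposition~\ref{prop:transfinite-extension}.

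If this direct route becomes technical, the fallback is simply to quote \cite[Remark~4.5]{BanakhBardylaGuranGutikRavsky2020}. That remark carries out exactly this step under $\mathrm{TT}$, so the corollary follows verbatim once Theorem~\ref{thm:main} supplies $\mathrm{TT}$ in ZFC.
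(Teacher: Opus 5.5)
Your primary route is exactly the paper's proof: Theorem~\ref{thm:main} supplies $\mathrm{TT}$ in ZFC, and then \cite[Remark~4.5]{BanakhBardylaGuranGutikRavsky2020} applies. On its own, that route establishes the corollary.

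\textbf{The explicit reconstruction fails.} The obstacle is not technical; the construction cannot work, and you should drop it or repair it. Let $H$ be any totally bounded group and $a\in H$. Note that $\tau$ is totally bounded: it is an initial topology of homomorphisms into the compact group $\T$, and in any case every countably compact group is totally bounded. In such an $H$, the closure of $\N a$ is a subgroup. To see this, let $V$ be a neighborhood of $0$. Choose $W$ with $W-W\subseteq V$ and a finite $F$ with $H=F+W$. Two of $a,2a,\ldots,(|F|+1)a$, say $ma$ and $na$ with $m<n$, lie in a single translate of $W$. Hence $(n-m)a\in V$, that is, $(n-m-1)a\in -a+V$ with $n-m-1\in\N$. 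Thus $-a\in\overline{\N a}$. Since $\overline{\N a}$ is closed under addition, it contains $\Z a$, and so $\overline{\N a}=\overline{\Z a}$ is a subgroup. Consequently $M=\overline{\N e_0}$ is always a group. Your worry that $-a$ might be an accumulation point of $\N a$ is therefore not merely unresolved: it is forced.

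\textbf{Why the proposed fix cannot help.} Adjoining further continuous homomorphisms into compact groups keeps the topology totally bounded, so the argument above still applies. More directly, for any continuous homomorphism $\phi:H\to K$ into a compact group, $\phi(-e_0)$ lies in $\overline{\N\phi(e_0)}$ in $K$. So no such $\phi$ can separate $-e_0$ from $\N e_0$.

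\textbf{A self-contained repair.} Take the closure inside the Wallace semigroup rather than inside $H$. Let $P$ be the nonnegative cone from Corollary~\ref{cor:wallace}, and put $M=P\cap\overline{\N e_0}$, with the closure taken in $H$.
\begin{itemize}
\item $M$ is a submonoid of $H$, being the intersection of two submonoids.
\item $M$ is closed in $P$, which is countably compact, so $M$ is countably compact.
\item $M$ is Hausdorff as a subspace of $H$.
\item $M$ is monothetic: $\N e_0\subseteq M$, and the closure of $\N e_0$ in $M$ is $\overline{\N e_0}\cap M=M$.
\item $M$ is not a group: $e_0\in M$, while its only possible inverse $-e_0$ is not in $P$.
\end{itemize}
This gives the corollary in ZFC directly from Corollary~\ref{cor:wallace}, without appeal to \cite{BanakhBardylaGuranGutikRavsky2020}.
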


\begin{proof}
Again Theorem~\ref{thm:main} establishes \(\mathrm{TT}\), so the construction
in \cite[Remark~4.5]{BanakhBardylaGuranGutikRavsky2020} applies.
\end{proof}

Finally, we record a consequence for \emph{suitable sets}.  A suitable set in a
topological group is a discrete subset whose union with the identity is closed
and which topologically generates the group.  Dikranjan asked, in particular,
whether there is a topologically finitely generated topological group without
infinite suitable sets \cite[Question~5.4]{Dikranjan1999}.

\begin{proposition}\label{prop:suitable-sets}
In ZFC there exists an infinite monothetic countably compact topological group
without nontrivial convergent sequences and without infinite suitable sets.
\end{proposition}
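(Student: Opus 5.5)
The plan is to pass to a closed monothetic subgroup of one of the groups given by Theorem~\ref{thm:main}, and then to show that suitable sets are automatically finite in any countably compact group without nontrivial convergent sequences.

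\emph{Step 1: the group.} Let $(G,\tau_G)$ be the group constructed in Section~\ref{sec:group-topologies} for $G=\Z^{(\cfrak)}$; any torsion-free group from Theorem~\ref{thm:main} would do. Fix $a=e_0$ and put $H=\overline{\langle a\rangle}$. This is a closed subgroup, hence countably compact, since closed subspaces of countably compact spaces are countably compact. It inherits the property that every convergent sequence is eventually constant. It is infinite because $\langle a\rangle\cong\Z$, as $G$ is torsion-free. By construction $H$ is monothetic, being topologically generated by $a$.

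\emph{Step 2: a suitable set contains no countably infinite subset.} Let $S\subseteq H$ be suitable, so $S$ is discrete and $S\cup\{0\}$ is closed, and suppose toward a contradiction that $S$ is infinite. Choose a countably infinite $T\subseteq S$. Then $\overline T\subseteq S\cup\{0\}$. No point of $S\setminus T$ lies in $\overline T$, because each point of $S$ has a neighborhood meeting $S$ only in itself. Hence $T\cup\{0\}$ is closed and $T$ is discrete. Consequently $0$ is the only possible accumulation point of any infinite subset of $T$.

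\emph{Step 3: the key topological step.} Let $U$ be an open neighborhood of $0$. Then $T\setminus U$ is closed in $H$: its closure lies in $T\cup\{0\}$ and misses $U\ni0$. It is also discrete. By countable compactness of $H$, the set $T\setminus U$ must be finite. Thus every neighborhood of $0$ contains all but finitely many points of $T$, so any injective enumeration of $T$ is a nontrivial sequence converging to $0$. This contradicts Step~1, so $S$ is finite.

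The only real point is this argument in Step~3, which converts ``$0$ is the unique accumulation point'' into genuine convergence using countable compactness. The rest is bookkeeping: checking that closedness and discreteness pass to the countable subset $T$, and that monotheticity and the absence of convergent sequences pass to $H$. Since the argument in Steps~2 and~3 uses only countable compactness, it applies to $H$ itself.
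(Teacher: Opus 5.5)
Your proof is correct and follows essentially the same route as the paper. Both pass to the closed monothetic subgroup $H=\overline{\langle g\rangle}$ of a group from Theorem~\ref{thm:main}, then use countable compactness, closedness of $S\cup\{0\}$ and discreteness of $S$ to show that an infinite suitable set would give a nontrivial sequence converging to $0$. The only difference is phrasing: you argue that $T\setminus U$ is closed and discrete, hence finite, while the paper places an accumulation point of the points outside $U$ in $H\setminus U$ and rules it out directly.
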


\begin{proof}
Let $(G,\tau_G)$ be supplied by Theorem~\ref{thm:main}, choose $0\neq g\in G$,
and put $H=\overline{\langle g\rangle}$.  The group $H$ is infinite and
monothetic: the cyclic group $\langle g\rangle$ is infinite because $G$ is
torsion-free.  The subgroup $H$ is closed in $G$, hence countably compact, and
as a subspace of $G$ it has no nontrivial convergent sequences.

Suppose that $S\subseteq H$ were an infinite suitable set and choose distinct
points $s_n\in S$.  We claim that $s_n\to0$.  Let $U$ be an open neighborhood
of $0$.  If infinitely many $s_n$ lay outside $U$, their set would have an
accumulation point in the closed countably compact subspace $H\setminus U$.
Since $S\cup\{0\}$ is closed, that accumulation point would belong to
$S\cup\{0\}$; it could not be $0$, and it could not belong to $S$ because
$S$ is discrete.  This is a contradiction.  Hence all but finitely many
$s_n$ belong to $U$, proving the claim.  The resulting nontrivial convergent
sequence contradicts the defining property of $G$.  Therefore $H$ has no
infinite suitable set.
\end{proof}

Proposition~\ref{prop:suitable-sets} answers the example clause of
\cite[Question~5.4]{Dikranjan1999} in ZFC.

%% file: sections/10-acknowledgments.tex
\section*{Acknowledgments}

The second author was supported by the S\~ao Paulo Research Foundation
(FAPESP), grant no.~2025/07302-0.

\section*{Declaration of generative AI and AI-assisted technologies in the
manuscript preparation process}

OpenAI Codex, powered by the GPT-5.6 Sol model, was used extensively as a
generative research and writing tool.  In an iterative process directed by the
authors, the model carried out most of the exploratory proof development and
generated most of the first draft of the manuscript and the accompanying Lean~4
formalization.  The authors formulated the problem and objectives, provided
the mathematical context and constraints, steered the iterations through
prompts and feedback, supplied ideas and suggested strategies, selected and
revised the outputs, reviewed the final arguments, and extensively rewrote the
text, using AI again for proofreading and review.  The authors take full
responsibility for the accuracy,
originality, and integrity of the work.

\section*{Dedication}
The authors dedicate this paper to Professor Artur Hideyuki Tomita (University
of S\~ao Paulo, Brazil), who supervised them during their doctoral studies.
The authors' earlier research under his guidance provided invaluable preparation for the present work.
Professor Tomita made significant contributions to the problems studied in this paper, and many of the arguments presented here are inspired by his previous work.
Without Professor Tomita's work, the present paper would not have been
possible.